\newcommand{\Fg}{\mathfrak{g}}
\newcommand{\Fh}{\mathfrak{h}}
\newcommand{\CB}{\mathcal{B}}
\newcommand{\CZ}{\mathcal{Z}}
\newcommand{\CK}{\mathcal{K}}
\newcommand{\CO}{\mathcal{O}}
\newcommand{\BC}{\mathbb{C}}
\newcommand{\BR}{\mathbb{R}}
\newcommand{\BZ}{\mathbb{Z}}
\newcommand{\mv}{\mathcal{MV}}
\newcommand{\Hom}{\mathop{\rm Hom}\nolimits}
\newcommand{\wt}{\mathop{\rm wt}\nolimits}
\newcommand{\Conv}{\mathop{\rm Conv}\nolimits}
\newcommand{\Stab}{\mathop{\rm Stab}\nolimits}
\newcommand{\Ad}{\mathop{\rm Ad}\nolimits}
\newcommand{\Gr}{\mathcal{G}r}
\newcommand{\pair}[2]{\langle #1,\,#2 \rangle}
\newcommand{\Bpair}[2]{\bigl\langle #1,\,#2 \bigr\rangle}
\newcommand{\ol}[1]{\overline{#1}}
\newcommand{\ve}{\varepsilon}
\newcommand{\vp}{\varphi}
\newcommand{\bzero}{\mathbf{0}}
\newcommand{\bb}{\mathbf{b}}
\newcommand{\bi}{\mathbf{i}}
\newcommand{\bj}{\mathbf{j}}
\newcommand{\wi}[1]{w^{\bi}_{#1}}
\newcommand{\wj}[1]{w^{\bj}_{#1}}
\newcommand{\si}[1]{s_{i_{#1}}}
\newcommand{\Ni}[1]{n^{\bi}_{#1}}
\newcommand{\Nj}[1]{n^{\bj}_{#1}}
\renewcommand\section{\@startsection{section}{1}{0pt}
{-3.5ex plus -1ex minus -.2ex}{1.0ex plus .2ex}{\large\bf}}
\renewcommand\subsection{\@startsection{subsection}{1}{0pt}
{2.5ex plus 1ex minus .2ex}{-1em}{\bf}}
\newcommand{\vsp}{\vspace{3mm}}
\theoremstyle{plain}
\newtheorem{thm}{Theorem}[subsection]
\newtheorem{lem}[thm]{Lemma}
\newtheorem{prop}[thm]{Proposition}
\newtheorem{fact}[thm]{Fact}
\theoremstyle{definition}
\newtheorem{dfn}[thm]{Definition}
\theoremstyle{remark}
\newtheorem{rem}[thm]{Remark}
\begin{document}

\setlength{\baselineskip}{18pt}

\title{\Large\bf Tensor products and Minkowski sums of \\[3mm]
Mirkovi\'c-Vilonen polytopes}
\author{
 Syu Kato%
 \footnote{Supported in part by JSPS Research Fellowships 
           for Young Scientists (No.\,20740011).} \\
 \small Department of Mathematics, Kyoto University, \\
 \small Kitashirakawa-Oiwake-cho, Sakyo-ku, Kyoto 606-8502, Japan \\
 \small (e-mail: {\tt syuchan@math.kyoto-u.ac.jp}) \\[5mm]
 Satoshi Naito%
 \footnote{Supported in part by Grant-in-Aid for Scientific Research 
           (No.\,20540006), JSPS.} \\ 
 \small Institute of Mathematics, University of Tsukuba, \\
 \small Tsukuba, Ibaraki 305-8571, Japan \ 
 (e-mail: {\tt naito@math.tsukuba.ac.jp})
 \\[2mm] and \\[2mm]
 Daisuke Sagaki%
 \footnote{Supported in part by JSPS Research Fellowships 
           for Young Scientists (No.\,19740004).} \\ 
 \small Institute of Mathematics, University of Tsukuba, \\
 \small Tsukuba, Ibaraki 305-8571, Japan \ 
 (e-mail: {\tt sagaki@math.tsukuba.ac.jp})
}
\date{}
\maketitle

%=======================%
%     START ABSTRACT    %
%=======================%
%
\begin{abstract} \setlength{\baselineskip}{16pt}
The purpose of this paper is to prove that 
the Mirkovi\'c-Vilonen (MV for short) polytope corresponding 
to the tensor product of two arbitrary MV polytopes is contained 
in the Minkowski sum of these two MV polytopes. This generalizes 
the result in our previous paper \cite{KNS}, which was obtained 
under the assumption that the first tensor factor is 
an extremal MV polytope.
\end{abstract}
%
%=========================%
%     START SECTION 01    %
%=========================%
%
\section{Introduction.}
\label{sec:intro}
In our previous paper \cite{KNS}, we proved that 
the Mirkovi\'c-Vilonen (MV for short) polytope corresponding 
to the tensor product of two MV polytopes is contained 
in the Minkowski sum of the two MV polytopes, under 
the assumption that the first (i.e., left) tensor factor 
is an extremal MV polytope. The purpose of the present paper 
is to prove the same result for arbitrary two MV polytopes, 
without any assumption on the first tensor factor.

Following the notation of \cite{KNS}, let $G$ be a complex 
semisimple algebraic group with Lie algebra $\Fg$, and $T$ 
a maximal torus with Lie algebra $\Fh$. For a dominant coweight 
$\lambda \in X_{\ast}(T):=\Hom(\BC^{\ast},\,T)$ for $G$, 
let $\mv(\lambda)$ denote the set of all MV polytopes $P$ 
of highest vertex $\lambda$ such that $P \subset 
\Conv(W \cdot \lambda) \subset \Fh_{\BR}:=
\BR \otimes_{\BZ} X_{\ast}(T)$; the set $\mv(\lambda)$ 
gives a realization of the crystal basis $\CB(\lambda)$ 
of the irreducible highest weight module of 
highest weight $\lambda$ over the quantized universal 
enveloping algebra $U_{q}(\Fg^{\vee})$ of 
the (Langlands) dual Lie algebra $\Fg^{\vee}$ of $\Fg$. 

Let $\lambda_{1},\,\lambda_{2} \in X_{\ast}(T)$ be dominant coweights, 
and $P_{1} \in \mv(\lambda_{1})$, $P_{2} \in \mv(\lambda_{2})$. 
If we consider the tensor product
\begin{equation*}
P_{1} \otimes P_{2} \in \mv(\lambda_{1}) \otimes \mv(\lambda_{2})
\cong \CB(\lambda_{1}) \otimes \CB(\lambda_{2}),
\end{equation*}
then there exists a unique dominant coweight $\lambda \in X_{\ast}(T)$ 
and embedding $\iota_{\lambda}:\mv(\lambda) \ (\cong \CB(\lambda)) \hookrightarrow 
\mv(\lambda_{1}) \otimes \mv(\lambda_{2})$ of crystals such that 
$P_{1} \otimes P_{2}=\iota_{\lambda}(P)$ for some $P \in \mv(\lambda)$. 
Now our main result (Theorem~\ref{thm:main}) states that in $\Fh_{\BR}$, 
we have the inclusion $P \subset P_{1} + P_{2}$, 
where $P_{1}+P_{2}$ is the Minkowski sum of the MV polytopes 
$P_{1}$ and $P_{2}$; in \cite{KNS}, we proved the same assertion
under the assumption that $P_{1}$ is an extremal MV polytope. 

We should mention that our method of proof for Theorem~\ref{thm:main} 
is quite different from the one of \cite{KNS}, in which we used 
twisted products of (open dense subsets of) MV cycles in the twisted 
product of two affine Grassmannians. In fact, we make use of a description
(Proposition~\ref{prop:ehrig}) of MV polytopes in terms of Kashiwara 
data for $U_{q}(\Fg^{\vee})$, due to Ehrig, and reduce the problem to 
proving an inequality (Proposition~\ref{prop:key1}) between 
Gelfand-Goresky-MacPherson-Serganova (GGMS for short) data, or equivalently, 
between Berenstein-Zelevinsky (BZ for short) data. This inequality 
(more precisely, Proposition~\ref{prop:apdx} in the Appendix) 
can be regarded as a generalization to an arbitrary semisimple Lie algebra 
of the inequality of \cite[Proposition~2.7]{Kam0} in the case of type $A$;%
\footnote{After finishing this paper, we were informed by Kamnitzer that 
he also had (but, never really wrote down) a proof of this inequality 
in the general case along the lines of our geometric proof in the Appendix}
in \cite{Kam0}, the inequality was used to study the irreducible components 
of certain fibers of the convolution morphism for the affine Grassmannian. 

Since the inequality above is the most important ingredient, 
we provide two different proofs: one proof (given in Subsection~\ref{subsec:keys})
is based on the agreement, due to Kamnitzer \cite{Kam2}, of the 
Lusztig-Berenstein-Zelevinsky (LBZ for short) and 
Braverman-Finkelberg-Gaitsgory (BFG for short) 
crystal structures for $U_{q}(\Fg^{\vee})$ on the set of MV polytopes; 
another proof (given in the Appendix) is a purely geometric one based on 
the original definition of MV polytopes by Anderson \cite{A}. 

Once the inequality above is obtained, Theorem~\ref{thm:main} follows 
easily from the tensor product rule for the action of lowering 
Kashiwara operators for $U_{q}(\Fg^{\vee})$. 

This paper is organized as follows. In Section~\ref{sec:MV}, 
we first recall the basic notation and standard facts concerning 
MV polytopes. Next, we review the relation of MV polytopes 
with MV cycles in the affine Grassmannian, and also the LBZ ($=$ BFG) 
crystal structure on the set of MV polytopes. Furthermore, we give a 
description of MV polytopes in terms of Kashiwara data, due to Ehrig;
we include a short proof of it, which uses Kamnitzer's result.
In Section~\ref{sec:main}, we first state our main result 
(Theorem~\ref{thm:main}). Next, we prove an inequality 
(Proposition~\ref{prop:key1}) between GGMS (or BZ) data, which is a key 
to our proof of Theorem~\ref{thm:main}. Finally, by combining the above 
with the tensor product rule for crystals, we prove Theorem~\ref{thm:main}. 
In the Appendix, using the geometry of the affine Grassmannian, we give 
another proof of the inequality above (or, a slightly strengthened form of it).

%=========================%
%     START SECTION 02    %
%=========================%
%
\section{Mirkovi\'c-Vilonen polytopes.}
\label{sec:MV}

%==============================%
%     START SUBSECTION 0201    %
%==============================%
%
\subsection{Basic notation.}
\label{subsec:notation}

Let $G$ be a complex connected semisimple algebraic group, 
$T$ a maximal torus, $B$ a Borel subgroup containing $T$, and 
$U$ the unipotent radical of $B$; 
we choose the convention that 
the roots in $B$ are the negative ones. 
Let $X_{*}(T)$ denote the (integral) coweight lattice 
$\Hom(\BC^{*},\,T)$ for $G$, and $X_{*}(T)_{+}$ 
the set of dominant (integral) coweights for $G$; 
we regard the coweight lattice 
$X_{*}(T)$ as an additive subgroup of 
a real form $\Fh_{\BR}:=\BR \otimes_{\BZ} X_{*}(T)$ 
of the Lie algebra $\Fh$ of the maximal torus $T$. 
We denote by $G^{\vee}$ 
the (complex) Langlands dual group of $G$. 

Denote by $\Fg$ the Lie algebra of $G$, 
which is a complex semisimple Lie algebra. 
Let 
\begin{equation*}
\Bigl(A=(a_{ij})_{i,j \in I}, \, 
 \Pi:=\bigl\{\alpha_{j}\bigr\}_{j \in I}, \, 
 \Pi^{\vee}:=\bigl\{h_{j}\bigr\}_{j \in I}, \, 
 \Fh^{\ast},\,\Fh
 \Bigr)
\end{equation*}
be the root datum of $\Fg$, where 
$A=(a_{ij})_{i,j \in I}$ is the Cartan matrix, 
$\Fh$ is the Cartan subalgebra, 
$\Pi:=\bigl\{\alpha_{j}\bigr\}_{j \in I} \subset 
 \Fh^{\ast}:=\Hom_{\BC}(\Fh,\,\BC)$ 
is the set of simple roots, and 
$\Pi^{\vee}:=\bigl\{h_{j}\bigr\}_{j \in I} \subset \Fh$ 
is the set of simple coroots; note that 
$\pair{h_{i}}{\alpha_{j}}=a_{ij}$ for $i,\,j \in I$, 
where $\pair{\cdot}{\cdot}$ denotes the canonical pairing 
between $\Fh$ and $\Fh^{\ast}$, and that $\Fh_{\BR}= 
\sum_{j \in I} \BR h_{j} \subset \Fh$.  
Also, for $h,\,h' \in \Fh_{\BR}$, we write 
$h' \ge h$ if $h'-h \in Q^{\vee}_{+}:=\sum_{j \in I}\BZ_{\ge 0}h_{j}$. 
Let $W:=\langle s_{j} \mid j \in I \rangle$ 
be the Weyl group of $\Fg$, where $s_{j}$, $j \in I$, are 
the simple reflections, with length function 
$\ell:W \rightarrow \BZ_{\ge 0}$, 
the identity element $e \in W$, and 
the longest element $w_{0} \in W$; 
we denote by $\le$ the (strong) Bruhat order on $W$. 
Let $\Fg^{\vee}$ denote the Lie algebra of 
the Langlands dual group $G^{\vee}$ of $G$, 
which is the complex semisimple Lie algebra
associated to the root datum 
\begin{equation*}
\Bigl({}^{t}A=(a_{ji})_{i,j \in I}, \, 
 \Pi^{\vee}=\bigl\{h_{j}\bigr\}_{j \in I}, \, 
 \Pi=\bigl\{\alpha_{j}\bigr\}_{j \in I}, \, 
 \Fh,\,\Fh^{\ast}
 \Bigr);
\end{equation*}
note that the Cartan subalgebra of 
$\Fg^{\vee}$ is $\Fh^{\ast}$, not $\Fh$. 
Let $U_{q}(\Fg^{\vee})$ be 
the quantized universal enveloping algebra of 
$\Fg^{\vee}$ over $\BC(q)$. 
For a dominant coweight 
$\lambda \in X_{\ast}(T)_{+} \subset \Fh_{\BR}$, 
denote by $V(\lambda)$ the irreducible highest 
weight $U_{q}(\Fg^{\vee})$-module 
of highest weight $\lambda$, and by 
$\CB(\lambda)$ the crystal basis of $V(\lambda)$. 

%==============================%
%     START SUBSECTION 0202    %
%==============================%
%
\subsection{Mirkovi\'c-Vilonen polytopes.}
\label{subsec:MV}

In this subsection, following \cite{Kam1}, we recall 
a (combinatorial) characterization of 
Mirkovi\'c-Vilonen (MV for short) polytopes; 
the relation between this characterization and 
the original (geometric) definition of MV polytopes 
given by Anderson \cite{A} will be explained 
in \S\ref{subsec:geom}. 

As in \S\ref{subsec:notation}, 
we assume that $\Fg$ is a complex semisimple Lie algebra. 
Let $\mu_{\bullet}=(\mu_{w})_{w \in W}$ be 
a collection of elements of 
$X_{*}(T) \subset \Fh_{\BR}=\sum_{j \in I} \BR h_{j}$.
We call $\mu_{\bullet}=(\mu_{w})_{w \in W}$ a 
Gelfand-Goresky-MacPherson-Serganova 
(GGMS) datum if it satisfies 
the condition that 
%
%%%%%%%%%%%%%%%
%%% eq:GGMS %%%
%%%%%%%%%%%%%%%
%
\begin{equation} \label{eq:GGMS}
x^{-1} \cdot \mu_{z} - x^{-1} \cdot \mu_{x} 
 \in Q^{\vee}_{+} \qquad \text{for all $x,\,z \in W$}.
\end{equation}
It follows by induction 
with respect to the (weak) Bruhat order on $W$ 
that $\mu_{\bullet}=(\mu_{w})_{w \in W}$ is a GGMS datum 
if and only if 
%
%%%%%%%%%%%%%%%%%
%%% eq:length %%%
%%%%%%%%%%%%%%%%%
%
\begin{equation} \label{eq:length}
\mu_{ws_{i}}-\mu_{w} \in \BZ_{\ge 0}\,(w \cdot h_{i}) 
\quad 
\text{for every $w \in W$ and $i \in I$}. 
\end{equation}
%
%%%%%%%%%%%%%%%%%%
%%% rem:GGMS-Q %%%
%%%%%%%%%%%%%%%%%%
%
\begin{rem} \label{rem:GGMS-Q}
Let $\mu_{\bullet}=(\mu_{w})_{w \in W}$ be a GGMS datum, 
and take an arbitrary $w \in W$. 
We see from \eqref{eq:GGMS} (with $x=w_{0}$ and $z=w$) that 
$w_{0}^{-1} \cdot \mu_{w} - w_{0}^{-1} \cdot \mu_{w_{0}} \in Q^{\vee}_{+}$, 
which implies that $\mu_{w}$ is contained in $\mu_{w_{0}}-Q^{\vee}_{+}$. 
Since $\mu_{w_{0}} \in X_{\ast}(T)$, we deduce that 
$w^{-1} \cdot \mu_{w}$ is contained in $\mu_{w_{0}}-Q^{\vee}$, 
where we set $Q^{\vee}:=\sum_{j \in I} \BZ h_{j}$. 
Hence
\begin{equation*}
z^{-1} \cdot \mu_{z} - x^{-1} \cdot \mu_{x} \in Q^{\vee} 
\quad \text{for all $x,\,z \in W$}.
\end{equation*}
\end{rem}

%%%%%%%%%%%%%%%%%%%%
%%% rem:GGMS-sum %%%
%%%%%%%%%%%%%%%%%%%%
%
\begin{rem} \label{rem:GGMS-sum}
Let $\mu_{\bullet}^{(1)}=(\mu_{w}^{(1)})_{w \in W}$ and 
$\mu_{\bullet}^{(2)}=(\mu_{w}^{(2)})_{w \in W}$ be GGMS data. 
Then, it is obvious from the definition of GGMS data 
(i.e., from \eqref{eq:length}) that 
the (componentwise) sum 
\begin{equation*}
\mu_{\bullet}^{(1)}+\mu_{\bullet}^{(2)}:=
 (\mu_{w}^{(1)}+\mu_{w}^{(2)})_{w \in W}
\end{equation*}
of $\mu_{\bullet}^{(1)}$ and $\mu_{\bullet}^{(2)}$ is 
also a GGMS datum. 
\end{rem}

Following \cite{Kam1} and \cite{Kam2}, 
to each GGMS datum $\mu_{\bullet}=(\mu_{w})_{w \in W}$, 
we associate a convex polytope 
$P(\mu_{\bullet}) \subset \Fh_{\BR}$ by:
%
%%%%%%%%%%%%%%%
%%% eq:poly %%%
%%%%%%%%%%%%%%%
%
\begin{equation} \label{eq:poly}
P(\mu_{\bullet})=
\bigcap_{w \in W}
 \bigl\{ 
 v \in \Fh_{\BR} \mid 
 w^{-1} \cdot v- w^{-1} \cdot \mu_{w} \in 
 \textstyle{\sum_{j \in I}\BR_{\ge 0} h_{j}}
 \bigr\}; 
\end{equation}
the polytope $P(\mu_{\bullet})$ is called 
a pseudo-Weyl polytope with GGMS datum $\mu_{\bullet}$. 
Note that the GGMS datum $\mu_{\bullet}=(\mu_{w})_{w \in W}$ is 
determined uniquely by the convex polytope $P(\mu_{\bullet})$. 
Also, we know from \cite[Proposition~2.2]{Kam1} that 
the set of vertices of the polytope $P(\mu_{\bullet})$ 
is given by the collection $\mu_{\bullet}=(\mu_{w})_{w \in W}$ 
(possibly, with repetitions). In particular, we have
%
%%%%%%%%%%%%%%%
%%% eq:conv %%%
%%%%%%%%%%%%%%%
%
\begin{equation} \label{eq:conv}
P(\mu_{\bullet}) = 
\Conv\,\bigl\{\mu_{w} \mid w \in W\bigr\},
\end{equation}
where for a subset $X$ of $\Fh_{\BR}$, 
$\Conv X$ denotes the convex hull in $\Fh_{\BR}$ of $X$. 

In the proof of Theorem~\ref{thm:main} below, 
we need the following lemma about 
Minkowski sums of pseudo-Weyl polytopes. 
%
%%%%%%%%%%%%%%%%%%%%%
%%% lem:Minkowski %%%
%%%%%%%%%%%%%%%%%%%%%
%
\begin{lem}[{\cite[Lemma~6.1]{Kam1}}] \label{lem:Minkowski}
Let $P_{1}=P(\mu_{\bullet}^{(1)})$ and 
$P_{2}=P(\mu_{\bullet}^{(2)})$ be pseudo-Weyl polytopes 
with GGMS data 
$\mu_{\bullet}^{(1)}=(\mu_{w}^{(1)})_{w \in W}$ and 
$\mu_{\bullet}^{(2)}=(\mu_{w}^{(2)})_{w \in W}$, respectively. 
Then, the Minkowski sum
\begin{equation*}
P_{1}+P_{2}:=
 \bigl\{v_{1}+v_{2} \mid v_{1} \in P_{1},\,v_{2} \in P_{2}\bigr\}
\end{equation*}
of the pseudo-Weyl polytopes $P_{1}$ and $P_{2}$ is identical to 
the pseudo-Weyl polytope $P(\mu_{\bullet}^{(1)}+\mu_{\bullet}^{(2)})$ 
having GGMS datum $\mu_{\bullet}^{(1)}+\mu_{\bullet}^{(2)}=
 (\mu_{w}^{(1)}+\mu_{w}^{(2)})_{w \in W}$ 
(see Remark~\ref{rem:GGMS-sum}). 
\end{lem}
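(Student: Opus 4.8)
The plan is to establish the two inclusions $P(\mu_{\bullet}^{(1)}+\mu_{\bullet}^{(2)}) \subset P_{1}+P_{2}$ and $P_{1}+P_{2} \subset P(\mu_{\bullet}^{(1)}+\mu_{\bullet}^{(2)})$ separately, exploiting the two complementary descriptions of a pseudo-Weyl polytope: as an intersection of half-spaces, as in \eqref{eq:poly}, and as the convex hull of its vertices, as in \eqref{eq:conv}. The inclusion $P_{1}+P_{2} \subset P(\mu_{\bullet}^{(1)}+\mu_{\bullet}^{(2)})$ I would prove using the half-space description: take $v_{1} \in P_{1}$ and $v_{2} \in P_{2}$; then for every $w \in W$ we have $w^{-1}\cdot v_{1} - w^{-1}\cdot\mu_{w}^{(1)} \in \sum_{j}\BR_{\ge 0}h_{j}$ and likewise for $v_{2}$, and since $\sum_{j}\BR_{\ge 0}h_{j}$ is closed under addition, adding these gives $w^{-1}\cdot(v_{1}+v_{2}) - w^{-1}\cdot(\mu_{w}^{(1)}+\mu_{w}^{(2)}) \in \sum_{j}\BR_{\ge 0}h_{j}$ for all $w$, i.e.\ $v_{1}+v_{2} \in P(\mu_{\bullet}^{(1)}+\mu_{\bullet}^{(2)})$. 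This direction is entirely routine.

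The reverse inclusion is the substantive one. Here I would use the vertex description \eqref{eq:conv}: it suffices to show that each vertex $\mu_{w}^{(1)}+\mu_{w}^{(2)}$ of $P(\mu_{\bullet}^{(1)}+\mu_{\bullet}^{(2)})$ lies in $P_{1}+P_{2}$, and this is immediate since $\mu_{w}^{(1)} \in P_{1}$ and $\mu_{w}^{(2)} \in P_{2}$ by \eqref{eq:conv} applied to each factor; then $P(\mu_{\bullet}^{(1)}+\mu_{\bullet}^{(2)}) = \Conv\{\mu_{w}^{(1)}+\mu_{w}^{(2)} \mid w \in W\} \subset \Conv(P_{1}+P_{2}) = P_{1}+P_{2}$, the last equality holding because the Minkowski sum of two convex sets is convex. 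The key point making this work is precisely the fact, quoted from \cite[Proposition~2.2]{Kam1}, that the vertices of a pseudo-Weyl polytope are indexed by $W$ via the GGMS datum, together with the fact (Remark~\ref{rem:GGMS-sum}) that $\mu_{\bullet}^{(1)}+\mu_{\bullet}^{(2)}$ is again a GGMS datum so that $P(\mu_{\bullet}^{(1)}+\mu_{\bullet}^{(2)})$ is defined.

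The main obstacle, such as it is, lies in the reverse inclusion: one must be careful that $\Conv\{\mu_{w}^{(1)}+\mu_{w}^{(2)}\}$ genuinely equals the whole polytope $P(\mu_{\bullet}^{(1)}+\mu_{\bullet}^{(2)})$ and is not merely contained in it — this is exactly what \eqref{eq:conv} guarantees, but only because the collection $(\mu_{w}^{(1)}+\mu_{w}^{(2)})_{w\in W}$ is verified to be a GGMS datum. Once both inclusions are in hand, combining them yields $P_{1}+P_{2} = P(\mu_{\bullet}^{(1)}+\mu_{\bullet}^{(2)})$, which is the assertion. I expect the whole argument to be short; the content is really just the interplay between the two descriptions of pseudo-Weyl polytopes and the elementary fact that $\sum_{j}\BR_{\ge 0}h_{j}$ is an additive monoid. $\bqed$
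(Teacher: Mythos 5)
Your argument is correct. The paper itself does not prove this lemma---it is quoted directly from \cite[Lemma~6.1]{Kam1}---so there is no in-paper proof to compare against; but your two-directional argument is sound and self-contained given what the paper does set up. The forward inclusion $P_{1}+P_{2}\subset P(\mu_{\bullet}^{(1)}+\mu_{\bullet}^{(2)})$ by adding half-space inequalities from \eqref{eq:poly} is routine as you say, and the reverse inclusion correctly rests on the two facts the paper records: that the componentwise sum of GGMS data is again a GGMS datum (Remark~\ref{rem:GGMS-sum}), and that for any GGMS datum the pseudo-Weyl polytope equals the convex hull of its $\mu_w$'s (equation \eqref{eq:conv}, citing \cite[Proposition~2.2]{Kam1}). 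The only point worth stressing---which you do flag---is that \eqref{eq:conv} is genuinely needed and is not automatic from the half-space description \eqref{eq:poly} alone: without the GGMS condition on $\mu_{\bullet}^{(1)}+\mu_{\bullet}^{(2)}$, the intersection of half-spaces could be strictly larger than the convex hull of the $\mu_w^{(1)}+\mu_w^{(2)}$, and the reverse inclusion would fail. With that in place, the proof is complete.
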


Furthermore, we need to 
recall from \cite[\S2.3]{Kam1} the notion of 
Berenstein-Zelevinsky (BZ for short) data. 
We set $\Gamma:=
 \bigl\{w \cdot \Lambda_{j} \mid w \in W,\,j \in I\bigr\}$, 
where $\Lambda_{j}$, $j \in I$, are 
the fundamental weights for $\Fg$. 
Let $P=P(\mu_{\bullet})$ be a pseudo-Weyl polytope 
with GGMS datum $\mu_{\bullet}=(\mu_{w})_{w \in W}$. 
For each $\gamma \in \Gamma$, we set
\begin{equation*}
M_{\gamma}:=\pair{\mu_{w}}{w \cdot \Lambda_{j}}
\quad
\text{if $\gamma=w \cdot \Lambda_{j}$ 
  for some $w \in W$ and $j \in I$};
\end{equation*}
note that the number $M_{\gamma}$ does note depend on 
the expression $\gamma=w \cdot \Lambda_{j}$, 
$w \in W$, $j \in I$, of $\gamma \in \Gamma$.
We call the collection 
$M_{\bullet}=(M_{\gamma})_{\gamma \in \Gamma}$ 
the BZ datum of the pseudo-Weyl polytope $P$. 
We know from \cite[Proposition~2.2]{Kam1} that
%
%%%%%%%%%%%%%%%%%
%%% eq:polybz %%%
%%%%%%%%%%%%%%%%%
%
\begin{equation} \label{eq:polybz}
P=P(\mu_{\bullet})=
 \bigl\{v \in \Fh_{\BR} \mid 
   \text{$\pair{v}{\gamma} \ge M_{\gamma}$ for all $\gamma \in \Gamma$}
 \bigr\}.
\end{equation}
Since the proof of \cite[Lemma~4.5.4]{NS-dp} for MV polytopes 
works equally well for pseudo-Weyl polytopes, we have the following. 
%
%%%%%%%%%%%%%%
%%% lem:bz %%%
%%%%%%%%%%%%%%
%
\begin{lem} \label{lem:bz}
Let $P$ and $P'$ be pseudo-Weyl polytopes 
with BZ data $M_{\bullet}=(M_{\gamma})_{\gamma \in \Gamma}$ and 
$M_{\bullet}'=(M_{\gamma}')_{\gamma \in \Gamma}$, respectively. 
Then, $P \subset P'$ if and only if 
$M_{\gamma} \ge M_{\gamma}'$ 
for all $\gamma \in \Gamma$. 
\end{lem}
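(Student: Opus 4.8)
The plan is to observe that for a pseudo-Weyl polytope $P=P(\mu_{\bullet})$ with BZ datum $M_{\bullet}=(M_{\gamma})_{\gamma \in \Gamma}$, the number $M_{\gamma}$ is precisely the minimum value of the linear functional $\pair{\cdot}{\gamma}$ on $P$. Indeed, the half-space description \eqref{eq:polybz} gives $\pair{v}{\gamma} \ge M_{\gamma}$ for every $v \in P$, while, writing $\gamma=w \cdot \Lambda_{j}$ for some $w \in W$ and $j \in I$, the vertex $\mu_{w}$ of $P$ (see \eqref{eq:conv}) satisfies $\pair{\mu_{w}}{\gamma}=\pair{\mu_{w}}{w \cdot \Lambda_{j}}=M_{\gamma}$ by the very definition of the BZ datum. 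Hence

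\begin{equation*}
M_{\gamma}=\min_{v \in P} \pair{v}{\gamma}
\qquad \text{for all $\gamma \in \Gamma$},
\end{equation*}

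and likewise $M_{\gamma}'=\min_{v \in P'} \pair{v}{\gamma}$ for all $\gamma \in \Gamma$.

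Granting this, both directions are immediate. For the ``if'' direction, assume $M_{\gamma} \ge M_{\gamma}'$ for all $\gamma \in \Gamma$; then any $v \in P$ satisfies $\pair{v}{\gamma} \ge M_{\gamma} \ge M_{\gamma}'$ for all $\gamma \in \Gamma$, so $v \in P'$ by \eqref{eq:polybz} applied to $P'$, whence $P \subset P'$. For the ``only if'' direction, assume $P \subset P'$; then for each $\gamma \in \Gamma$, taking the minimum of $\pair{\cdot}{\gamma}$ over the larger set $P'$ can only decrease it, so $M_{\gamma}'=\min_{v \in P'}\pair{v}{\gamma} \le \min_{v \in P}\pair{v}{\gamma}=M_{\gamma}$, which is the desired inequality.

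I do not anticipate any serious obstacle here: the only point requiring care is the identification of $M_{\gamma}$ with the support-function value $\min_{v \in P}\pair{v}{\gamma}$, and this is a direct consequence of the two facts already recorded in the excerpt, namely the half-space description \eqref{eq:polybz} and the statement (from \cite[Proposition~2.2]{Kam1}) that the vertices of $P(\mu_{\bullet})$ are exactly the $\mu_{w}$, $w \in W$. This is precisely why the argument of \cite[Lemma~4.5.4]{NS-dp}, originally stated for MV polytopes, carries over verbatim to arbitrary pseudo-Weyl polytopes.
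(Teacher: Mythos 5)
Your proof is correct. The key observation — that $M_{\gamma}=\min_{v\in P}\pair{v}{\gamma}$, because \eqref{eq:polybz} gives $\pair{v}{\gamma}\ge M_{\gamma}$ on $P$ while the vertex $\mu_{w}$ (for $\gamma=w\cdot\Lambda_{j}$) achieves equality by the definition of $M_{\gamma}$ — is exactly the right one, and from it both directions fall out cleanly. The paper itself does not spell out a proof here; it simply appeals to \cite[Lemma~4.5.4]{NS-dp}, noting that the argument there carries over from MV polytopes to general pseudo-Weyl polytopes, and that argument is precisely the support-function identification you give. One small remark: for the ``only if'' direction you do not actually need the full minimum formula — it suffices to note that $\mu_{w}\in P\subset P'$ and then apply \eqref{eq:polybz} for $P'$ to the vertex $\mu_{w}$, giving $M_{w\cdot\Lambda_{j}}=\pair{\mu_{w}}{w\cdot\Lambda_{j}}\ge M_{w\cdot\Lambda_{j}}'$ — but this is just a minor repackaging of the same idea.
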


Let $R(w_{0})$ denote 
the set of all reduced words for $w_{0}$, that is, 
all sequences $(i_{1},\,i_{2},\,\dots,\,i_{m})$ of elements of $I$ 
such that $s_{i_{1}}s_{i_{2}} \cdots s_{i_{m}}=w_{0}$, where 
$m$ is the length $\ell(w_{0})$ of the longest element $w_{0}$.
Let $\bi=(i_{1},\,i_{2},\,\dots,\,i_{m}) \in R(w_{0})$ 
be a reduced word for $w_{0}$. 
We set $\wi{l}:=\si{1}\si{2} \cdots \si{l} \in W$ 
for $0 \le l \le m$. For a GGMS datum 
$\mu_{\bullet}=(\mu_{w})_{w \in W}$, define integers 
(called the lengths of edges) 
$\Ni{l}=\Ni{l}(\mu_{\bullet}) \in \BZ_{\ge 0}$, 
$1 \le l \le m$, via the following 
``length formula'' (see \cite[Eq.\,(8)]{Kam1} and 
\eqref{eq:length} above): 
%
%%%%%%%%%%%%
%%% eq:n %%%
%%%%%%%%%%%%
%
\begin{equation} \label{eq:n}
\mu_{\wi{l}}-\mu_{\wi{l-1}}=\Ni{l}\,\wi{l-1} \cdot h_{i_{l}}.
\end{equation}

\vsp

{\small 
\hspace{55mm}
%WinTpicVersion3.08
\unitlength 0.1in
\begin{picture}( 15.0000,  3.3500)( 15.5000, -7.0000)
% CIRCLE 2 0 0 0
% 4 1600 600 1600 550 1600 550 1600 550
% 
\special{pn 8}%
\special{sh 0.600}%
\special{ar 1600 600 50 50  0.0000000 6.2831853}%
% STR 2 0 3 0
% 3 3000 600 3000 700 1 0
% $\mu_{\wi{l}}=\mu_{\wi{l-1}\si{l}}$
\put(30.0000,-7.0000){\makebox(0,0)[lt]{$\mu_{\wi{l}}=\mu_{\wi{l-1}\si{l}}$}}%
% STR 2 0 3 0
% 3 2300 350 2300 450 5 0
% $\Ni{l}$
\put(23.0000,-4.5000){\makebox(0,0){$\Ni{l}$}}%
% CIRCLE 2 0 0 0
% 4 3000 600 3000 550 3000 550 3000 550
% 
\special{pn 8}%
\special{sh 0.600}%
\special{ar 3000 600 50 50  0.0000000 6.2831853}%
% LINE 2 0 3 0
% 2 1600 600 3000 600
% 
\special{pn 8}%
\special{pa 1600 600}%
\special{pa 3000 600}%
\special{fp}%
% STR 2 0 3 0
% 3 1600 600 1600 700 1 0
% $\mu_{\wi{l-1}}$
\put(16.0000,-7.0000){\makebox(0,0)[lt]{$\mu_{\wi{l-1}}$}}%
\end{picture}%

}

\vsp

Now we are ready to give a (combinatorial) characterization of 
Mirkovi\'c-Vilonen (MV) polytopes, due to Kamnitzer \cite{Kam1}. 
This result holds for an arbitrary complex semisimple Lie algebra $\Fg$, 
but we give its precise statement only in the case that 
$\Fg$ is simply-laced since we do not make use of it 
in this paper; we merely mention that 
when $\Fg$ is not simply-laced, 
there are also conditions on the lengths 
$\Ni{l}$, $1 \le l \le m$, $\bi \in R(w_{0})$, 
for the other possible values of $a_{ij}$ and $a_{ji}$
(we refer the reader to \cite[\S3]{BeZe} for explicit formulas).
%
%%%%%%%%%%%%%%
%%% dfn:MV %%%
%%%%%%%%%%%%%%
%
\begin{dfn} \label{dfn:MV}
A GGMS datum $\mu_{\bullet}=(\mu_{w})_{w \in W}$ is 
said to be a Mirkovi\'c-Vilonen (MV) datum if 
it satisfies the following conditions: 

(1) If $\bi=(i_{1},\,i_{2},\,\dots,\,i_{m}) \in R(w_{0})$ and 
$\bj=(j_{1},\,j_{2},\,\dots,\,j_{m}) \in R(w_{0})$ are related by 
a $2$-move, that is, if there exist indices $i,\,j \in I$ 
with $a_{ij}=a_{ji}=0$ and an integer $0 \le k \le m-2$ 
such that $i_{l}=j_{l}$ for all $1 \le l \le m$ with 
$l \ne k+1,\,k+2$, and such that 
$i_{k+1}=j_{k+2}=i$, $i_{k+2}=j_{k+1}=j$, then 
there hold
\begin{equation*}
\begin{cases}
\Ni{l}=\Nj{l} \quad 
  \text{for all $1 \le l \le m$ with $l \ne k+1,\,k+2$, and} \\[1.5mm]
\Ni{k+1}=\Nj{k+2}, \quad \Ni{k+2}=\Nj{k+1}.
\end{cases}
\end{equation*}

\vspace{5mm}

\newcommand{\vertexa}{
  $\mu_{\wi{k+2}}=\mu_{\wi{k}s_is_j}=
   \mu_{\wj{k}s_js_i}=\mu_{\wj{k+2}}$
}

{\small 
\hspace*{-20mm}
%WinTpicVersion3.08
\unitlength 0.1in
\begin{picture}( 44.6000, 24.0000)(  4.4000,-28.0000)
% LINE 2 0 3 0
% 2 4200 800 4200 400
% 
\special{pn 8}%
\special{pa 4200 800}%
\special{pa 4200 400}%
\special{fp}%
% LINE 2 0 3 0
% 2 4200 2400 4200 2800
% 
\special{pn 8}%
\special{pa 4200 2400}%
\special{pa 4200 2800}%
\special{fp}%
% CIRCLE 2 0 0 0
% 4 4200 800 4200 750 4200 750 4200 750
% 
\special{pn 8}%
\special{sh 0.600}%
\special{ar 4200 800 50 50  0.0000000 6.2831853}%
% CIRCLE 2 0 0 0
% 4 4200 2400 4200 2350 4200 2350 4200 2350
% 
\special{pn 8}%
\special{sh 0.600}%
\special{ar 4200 2400 50 50  0.0000000 6.2831853}%
% STR 2 0 3 0
% 3 3500 1500 3500 1600 3 0
% $\mu_{\wi{k+1}}=\mu_{\wi{k}s_{i}}$
\put(35.0000,-16.0000){\makebox(0,0)[rb]{$\mu_{\wi{k+1}}=\mu_{\wi{k}s_{i}}$}}%
% STR 2 0 3 0
% 3 4300 2350 4300 2450 1 0
% $\mu_{\wi{k}}=\mu_{\wj{k}}$
\put(43.0000,-24.5000){\makebox(0,0)[lt]{$\mu_{\wi{k}}=\mu_{\wj{k}}$}}%
% LINE 2 0 3 0
% 8 4200 2400 4800 1600 4800 1600 4200 800 4200 800 3600 1600 3600 1600 4200 2400
% 
\special{pn 8}%
\special{pa 4200 2400}%
\special{pa 4800 1600}%
\special{fp}%
\special{pa 4800 1600}%
\special{pa 4200 800}%
\special{fp}%
\special{pa 4200 800}%
\special{pa 3600 1600}%
\special{fp}%
\special{pa 3600 1600}%
\special{pa 4200 2400}%
\special{fp}%
% CIRCLE 2 0 0 0
% 4 4800 1600 4800 1550 4800 1550 4800 1550
% 
\special{pn 8}%
\special{sh 0.600}%
\special{ar 4800 1600 50 50  0.0000000 6.2831853}%
% CIRCLE 2 0 0 0
% 4 3600 1600 3600 1550 3600 1550 3600 1550
% 
\special{pn 8}%
\special{sh 0.600}%
\special{ar 3600 1600 50 50  0.0000000 6.2831853}%
% STR 2 0 3 0
% 3 4900 1500 4900 1600 2 0
% $\mu_{\wj{k+1}}=\mu_{\wj{k}s_{j}}$
\put(49.0000,-16.0000){\makebox(0,0)[lb]{$\mu_{\wj{k+1}}=\mu_{\wj{k}s_{j}}$}}%
% STR 2 0 3 0
% 3 3800 1900 3800 2000 4 0
% $\Ni{k+1}$
\put(38.0000,-20.0000){\makebox(0,0)[rt]{$\Ni{k+1}$}}%
% STR 2 0 3 0
% 3 3800 1100 3800 1200 3 0
% $\Ni{k+2}$
\put(38.0000,-12.0000){\makebox(0,0)[rb]{$\Ni{k+2}$}}%
% STR 2 0 3 0
% 3 4600 1900 4600 2000 1 0
% $\Nj{k+1}$
\put(46.0000,-20.0000){\makebox(0,0)[lt]{$\Nj{k+1}$}}%
% STR 2 0 3 0
% 3 4600 1100 4600 1200 2 0
% $\Nj{k+2}$
\put(46.0000,-12.0000){\makebox(0,0)[lb]{$\Nj{k+2}$}}%
% STR 2 0 3 0
% 3 4300 700 4300 800 2 0
% \vertexa
\put(43.0000,-8.0000){\makebox(0,0)[lb]{\vertexa}}%
\end{picture}%

}

\vsp

(2) If $\bi=(i_{1},\,i_{2},\,\dots,\,i_{m}) \in R(w_{0})$ and 
$\bj=(j_{1},\,j_{2},\,\dots,\,j_{m}) \in R(w_{0})$ are 
related by a $3$-move, that is, 
if there exist indices $i,\,j \in I$ 
with $a_{ij}=a_{ji}=-1$ and an integer $0 \le k \le m-3$ 
such that $i_{l}=j_{l}$ for all $1 \le l \le m$ with 
$l \ne k+1,\,k+2,\,k+3$, and such that
$i_{k+1}=i_{k+3}=j_{k+2}=i$, 
$i_{k+2}=j_{k+1}=j_{k+3}=j$, then 
there hold 
\begin{equation*}
\begin{cases}
\Ni{l}=\Nj{l} \quad
  \text{for all $1 \le l \le m$ with $l \ne k+1,\,k+2,\,k+3$, and} \\[1.5mm]
\Nj{k+1}=\Ni{k+2}+\Ni{k+3}-\min \bigl(\Ni{k+1},\, \Ni{k+3}\bigr), \\[1.5mm]
\Nj{k+2}=\min \bigl(\Ni{k+1},\, \Ni{k+3}\bigr), \\[1.5mm]
\Nj{k+3}=\Ni{k+1}+\Ni{k+2}-\min \bigl(\Ni{k+1},\, \Ni{k+3}\bigr).
\end{cases}
\end{equation*}

\vspace{5mm}

\newcommand{\vertexb}{
  $\mu_{\wi{k+3}}=\mu_{\wi{k}s_is_js_i}=
   \mu_{\wj{k}s_js_is_j}=\mu_{\wj{k+3}}$
}

{\small 
\hspace*{-22.5mm}
%WinTpicVersion3.08
\unitlength 0.1in
\begin{picture}( 45.5000, 24.2000)(  1.5000,-28.0000)
% LINE 2 0 3 0
% 2 4000 805 3400 1205
% 
\special{pn 8}%
\special{pa 4000 806}%
\special{pa 3400 1206}%
\special{fp}%
% LINE 2 0 3 0
% 2 3400 1205 3400 2005
% 
\special{pn 8}%
\special{pa 3400 1206}%
\special{pa 3400 2006}%
\special{fp}%
% LINE 2 0 3 0
% 2 3400 2005 4000 2405
% 
\special{pn 8}%
\special{pa 3400 2006}%
\special{pa 4000 2406}%
\special{fp}%
% LINE 2 0 3 0
% 2 4000 2405 4600 2005
% 
\special{pn 8}%
\special{pa 4000 2406}%
\special{pa 4600 2006}%
\special{fp}%
% LINE 2 0 3 0
% 2 4600 2005 4600 1205
% 
\special{pn 8}%
\special{pa 4600 2006}%
\special{pa 4600 1206}%
\special{fp}%
% LINE 2 0 3 0
% 2 4600 1205 4000 805
% 
\special{pn 8}%
\special{pa 4600 1206}%
\special{pa 4000 806}%
\special{fp}%
% LINE 2 0 3 0
% 2 4000 800 4000 400
% 
\special{pn 8}%
\special{pa 4000 800}%
\special{pa 4000 400}%
\special{fp}%
% LINE 2 0 3 0
% 2 4000 2400 4000 2800
% 
\special{pn 8}%
\special{pa 4000 2400}%
\special{pa 4000 2800}%
\special{fp}%
% CIRCLE 2 0 0 0
% 4 4000 800 4000 750 4000 750 4000 750
% 
\special{pn 8}%
\special{sh 0.600}%
\special{ar 4000 800 50 50  0.0000000 6.2831853}%
% CIRCLE 2 0 0 0
% 4 4600 1200 4600 1150 4600 1150 4600 1150
% 
\special{pn 8}%
\special{sh 0.600}%
\special{ar 4600 1200 50 50  0.0000000 6.2831853}%
% CIRCLE 2 0 0 0
% 4 4600 2000 4600 1950 4600 1950 4600 1950
% 
\special{pn 8}%
\special{sh 0.600}%
\special{ar 4600 2000 50 50  0.0000000 6.2831853}%
% CIRCLE 2 0 0 0
% 4 4000 2400 4000 2350 4000 2350 4000 2350
% 
\special{pn 8}%
\special{sh 0.600}%
\special{ar 4000 2400 50 50  0.0000000 6.2831853}%
% CIRCLE 2 0 0 0
% 4 3400 2000 3400 1950 3400 1950 3400 1950
% 
\special{pn 8}%
\special{sh 0.600}%
\special{ar 3400 2000 50 50  0.0000000 6.2831853}%
% CIRCLE 2 0 0 0
% 4 3400 1200 3400 1150 3400 1150 3400 1150
% 
\special{pn 8}%
\special{sh 0.600}%
\special{ar 3400 1200 50 50  0.0000000 6.2831853}%
% STR 2 0 3 0
% 3 4150 450 4150 550 2 0
% \vertexb
\put(41.5000,-5.5000){\makebox(0,0)[lb]{\vertexb}}%
% STR 2 0 3 0
% 3 3700 850 3700 950 3 0
% $\Ni{k+3}$
\put(37.0000,-9.5000){\makebox(0,0)[rb]{$\Ni{k+3}$}}%
% STR 2 0 3 0
% 3 4400 900 4400 1000 2 0
% $\Nj{k+3}$
\put(44.0000,-10.0000){\makebox(0,0)[lb]{$\Nj{k+3}$}}%
% STR 2 0 3 0
% 3 4850 1500 4850 1600 5 0
% $\Nj{k+2}$
\put(48.5000,-16.0000){\makebox(0,0){$\Nj{k+2}$}}%
% STR 2 0 3 0
% 3 3150 1500 3150 1600 5 0
% $\Ni{k+2}$
\put(31.5000,-16.0000){\makebox(0,0){$\Ni{k+2}$}}%
% STR 2 0 3 0
% 3 3700 2150 3700 2250 4 0
% $\Ni{k+1}$
\put(37.0000,-22.5000){\makebox(0,0)[rt]{$\Ni{k+1}$}}%
% STR 2 0 3 0
% 3 4300 2150 4300 2250 1 0
% $\Nj{k+1}$
\put(43.0000,-22.5000){\makebox(0,0)[lt]{$\Nj{k+1}$}}%
% STR 2 0 3 0
% 3 4700 1250 4700 1350 2 0
% $\mu_{\wj{k+2}}=\mu_{\wj{k}s_js_i}$
\put(47.0000,-13.5000){\makebox(0,0)[lb]{$\mu_{\wj{k+2}}=\mu_{\wj{k}s_js_i}$}}%
% STR 2 0 3 0
% 3 4700 2050 4700 2150 2 0
% $\mu_{\wj{k+1}}=\mu_{\wj{k}s_j}$
\put(47.0000,-21.5000){\makebox(0,0)[lb]{$\mu_{\wj{k+1}}=\mu_{\wj{k}s_j}$}}%
% STR 2 0 3 0
% 3 4300 2550 4300 2650 1 0
% $\mu_{\wi{k}}=\mu_{\wj{k}}$
\put(43.0000,-26.5000){\makebox(0,0)[lt]{$\mu_{\wi{k}}=\mu_{\wj{k}}$}}%
% STR 2 0 3 0
% 3 3300 2050 3300 2150 3 0
% $\mu_{\wi{k+1}}=\mu_{\wi{k}s_i}$
\put(33.0000,-21.5000){\makebox(0,0)[rb]{$\mu_{\wi{k+1}}=\mu_{\wi{k}s_i}$}}%
% STR 2 0 3 0
% 3 3300 1250 3300 1350 3 0
% $\mu_{\wi{k+2}}=\mu_{\wi{k}s_is_j}$
\put(33.0000,-13.5000){\makebox(0,0)[rb]{$\mu_{\wi{k+2}}=\mu_{\wi{k}s_is_j}$}}%
% VECTOR 2 0 3 0
% 2 4400 650 4100 750
% 
\special{pn 8}%
\special{pa 4400 650}%
\special{pa 4100 750}%
\special{fp}%
\special{sh 1}%
\special{pa 4100 750}%
\special{pa 4170 748}%
\special{pa 4152 734}%
\special{pa 4158 710}%
\special{pa 4100 750}%
\special{fp}%
% VECTOR 2 0 3 0
% 2 4400 2600 4100 2450
% 
\special{pn 8}%
\special{pa 4400 2600}%
\special{pa 4100 2450}%
\special{fp}%
\special{sh 1}%
\special{pa 4100 2450}%
\special{pa 4152 2498}%
\special{pa 4148 2474}%
\special{pa 4170 2462}%
\special{pa 4100 2450}%
\special{fp}%
\end{picture}%

}
\end{dfn}

The pseudo-Weyl polytope $P(\mu_{\bullet})$ 
with GGMS datum $\mu_{\bullet}=(\mu_{w})_{w \in W}$ 
(see \eqref{eq:poly}) is 
a Mirkovi\'c-Vilonen (MV) polytope if and only if 
the GGMS datum $\mu_{\bullet}=(\mu_{w})_{w \in W}$ is 
an MV datum (see the proof of \cite[Proposition~5.4]{Kam1} and 
the comment following \cite[Theorem~7.1]{Kam1}).
Also, for a dominant coweight 
$\lambda \in X_{*}(T)_{+} \subset \Fh_{\BR}$ and a coweight 
$\nu \in X_{*}(T) \subset \Fh_{\BR}$, 
an MV polytope $P=P(\mu_{\bullet})$ with GGMS datum 
$\mu_{\bullet}=(\mu_{w})_{w \in W}$ is 
an MV polytope of highest vertex $\lambda$ and lowest vertex $\nu$ 
if and only if $\mu_{w_{0}}=\lambda$, $\mu_{e}=\nu$, and 
$P$ is contained in the convex hull $\Conv(W \cdot \lambda)$ of 
the $W$-orbit $W \cdot \lambda \subset \Fh_{\BR}$ 
(see \cite[Proposition~7]{A}); 
we denote by $\mv(\lambda)_{\nu}$ the set of MV polytopes 
of highest vertex $\lambda$ and lowest vertex $\nu$. 
For each dominant coweight 
$\lambda \in X_{*}(T)_{+} \subset \Fh_{\BR}$, we set
\begin{equation*}
\mv(\lambda):=\bigsqcup_{
 \nu \in X_{*}(T)} \mv(\lambda)_{\nu}.
\end{equation*}

%==============================%
%     START SUBSECTION 0203    %
%==============================%
%
\subsection{Relation between MV polytopes and MV cycles.}
\label{subsec:geom}

In this subsection, we review the relation of MV polytopes with 
MV cycles in the affine Grassmannian. 

Let us recall the definition of MV cycles 
in the affine Grassmannian, following 
\cite{MV1}, \cite{MV2} (and \cite{A}). 
Let $G$ be a complex connected semisimple algebraic group 
with Lie algebra $\Fg$, as in \S\ref{subsec:notation}. 
Let $\CO = \BC[[t]]$ denote the ring of formal power series, 
and $\CK = \BC((t))$ the field of formal Laurent series 
(the fraction field of $\CO$). 
The affine Grassmannian $\Gr$ for $G$ over $\BC$ is 
defined to be the quotient $G(\CK)/G(\CO)$, 
equipped with the structure of a complex algebraic ind-scheme, 
where $G(\CK)$ denotes the set of $\CK$-valued points of $G$, and 
$G(\CO) \subset G(\CK)$ denotes the set of $\CO$-valued points of $G$; 
we denote by $\pi:G(\CK) \twoheadrightarrow \Gr=G(\CK)/G(\CO)$ 
the natural quotient map, which is locally trivial 
in the Zariski topology. In what follows, 
for a subgroup $H \subset G(\CK)$ 
that is stable under the adjoint action of $T$ and 
for an element $w$ of the Weyl group $W \cong N_{G}(T)/T$ of $G$, 
we denote by ${}^{w}H$ the $w$-conjugate $\dot{w}H\dot{w}^{-1}$ of $H$, 
where $\dot{w} \in N_{G}(T)$ is a lift of $w \in W$. 

Since each coweight $\nu \in X_{*}(T)=\Hom(\BC^{\ast},\,T)$ 
is a regular map from $\BC^{\ast}$ to $T \subset G$, 
it gives a point $t^{\nu} \in G (\CK)$, 
which in turn, descends to a point 
$[t^{\nu}] \in \Gr=G(\CK)/G(\CO)$. 

For each $\nu \in X_{*}(T)$, we set
\begin{equation*}
\Gr^{\nu}:=G(\CO)[t^{\nu}] \subset \Gr,
\end{equation*}
the $G(\CO)$-orbit of $[t^{\nu}]$, which is a smooth 
quasi-projective algebraic variety over $\BC$. 
Also, for each $\nu \in X_{*}(T)$ and $w \in W$, 
we set 
\begin{equation*}
S_{\nu}^{w} := {}^{w}U(\CK)[t^{\nu}] \subset \Gr,
\end{equation*}
the ${}^{w}U(\CK)$-orbit of $[t^{\nu}]$, which is a (locally closed) 
ind-subscheme of $\Gr$; we write simply $S_{\nu}$ for $S_{\nu}^{e}$.
Then, we know the following two kinds 
of decompositions of $\Gr$ into orbits. 
First, we have
\begin{equation*}
\Gr=\bigsqcup_{\lambda \in X_{*}(T)_{+}} \Gr^{\lambda} \qquad 
\text{(Cartan decomposition)},
\end{equation*}
with $\Gr^{w \cdot \lambda}=\Gr^{\lambda}$ 
for $\lambda \in X_{*}(T)_{+}$ and $w \in W$; 
note that (see, for example, \cite[\S2]{MV2}) 
for each $\lambda \in X_{*}(T)_{+}$, the quasi-projective algebraic
variety $\Gr^{\lambda}$ is simply-connected, and of dimension 
$2\pair{\lambda}{\rho}$, where $\rho$ denotes half the sum
of the positive roots $\alpha \in \Delta_{+}$ for $G$, i.e., 
$2\rho=\sum_{\alpha \in \Delta_{+}}\alpha$. 
Second, we have for each $w \in W$, 
\begin{equation*}
\Gr=\bigsqcup_{\nu \in X_{*}(T)} S_{\nu}^{w} \qquad
\text{(Iwasawa decomposition)}.
\end{equation*}
Moreover, the (Zariski) closure relations among these orbits are 
described as follows (see \cite[\S2 and \S3]{MV2}): 
%
%%%%%%%%%%%%%%%%
%%% eq:Grlam %%%
%%%%%%%%%%%%%%%%
%
\begin{equation} \label{eq:Grlam}
\ol{\Gr^{\lambda}}=
 \bigsqcup_{
   \begin{subarray}{c} 
   \lambda' \in X_{\ast}(T)_{+} \\[1mm]
   \lambda' \le \lambda
   \end{subarray}
 } \Gr^{\lambda'}
\qquad \text{for $\lambda \in X_{*}(T)_{+}$};
\end{equation}
%
%%%%%%%%%%%%%%%
%%% eq:Snuw %%%
%%%%%%%%%%%%%%%
%
\begin{equation} \label{eq:Snuw}
\ol{S_{\nu}^{w}}=
 \bigsqcup_{
   \begin{subarray}{c} 
   \gamma \in X_{\ast}(T) \\[1mm]
   w^{-1} \cdot \gamma \ge w^{-1} \cdot \nu
   \end{subarray}
 } S_{\gamma}^{w}
\qquad \text{for $\nu \in X_{\ast}(T)$ and $w \in W$}.
\end{equation}

For $\lambda \in X_{*}(T)_{+}$, 
let $L(\lambda)$ denote the irreducible finite-dimensional 
representation of the Langlands dual 
group $G^{\vee}$ of $G$ of highest weight $\lambda$, and 
$\Omega(\lambda) \subset X_{*}(T)$ 
the set of weights of $L(\lambda)$. 
We know from \cite[Theorem~3.2 and Remark~3.3]{MV2} that 
$\nu \in X_{*}(T)$ is an element of $\Omega(\lambda)$ 
if and only if $\Gr^{\lambda} \cap S_{\nu} \ne \emptyset$, 
and, in this case, the intersection $\Gr^{\lambda} \cap S_{\nu}$ 
is of pure dimension $\pair{\lambda-\nu}{\rho}$. 

Now we come to the definition of MV cycles 
in the affine Grassmannian.
%
%%%%%%%%%%%%%%%%%%%
%%% dfn:MVcycle %%%
%%%%%%%%%%%%%%%%%%%
%
\begin{dfn}[{\cite[\S3]{MV2}; see also \cite[\S5.3]{A}}] \label{dfn:MVcycle}
Let $\lambda \in X_{*}(T)_{+}$ and $\nu \in X_{*}(T)$ be such that 
$\Gr^{\lambda} \cap S_{\nu} \ne \emptyset$, 
i.e., $\nu \in \Omega(\lambda)$. 
An MV cycle of highest weight $\lambda$ and weight $\nu$ is 
defined to be an irreducible component of the (Zariski) closure of 
the intersection $\Gr^{\lambda} \cap S_{\nu}$. 
\end{dfn}

We denote by $\CZ(\lambda)_{\nu}$ the set of MV cycles of 
highest weight $\lambda \in X_{*}(T)_{+}$ and weight $\nu \in X_{*}(T)$. 
Also, for each $\lambda \in X_{*}(T)_{+}$, we set 
\begin{equation*}
\CZ(\lambda) := 
 \bigsqcup_{\nu \in X_{*}(T)} \CZ(\lambda)_{\nu},
\end{equation*}
where $\CZ(\lambda)_{\nu} := \emptyset$ if 
$\Gr^{\lambda} \cap S_{\nu}=\emptyset$. 

Motivated by the discovery of MV cycles in the affine Grassmannian, 
Anderson \cite{A} proposed considering the ``moment map images'' 
of MV cycles as follows: Let $\lambda \in X_{*}(T)_{+}$. 
For an MV cycle $\bb \in \CZ(\lambda)$, we set 
\begin{equation*}
P(\bb):=\Conv \bigl\{
  \nu \in X_{\ast}(T) \subset \Fh_{\BR} \mid 
  [t^{\nu}] \in \bb \bigr\},
\end{equation*}
and call $P(\bb) \subset \Fh_{\BR}$ the moment map 
image of $\bb$\,; note that $P(\bb)$ is indeed 
a convex polytope in $\Fh_{\BR}$. 

The following theorem, due to Kamnitzer \cite{Kam1}, 
establishes an explicit relationship between 
MV polytopes and MV cycles. 
%
%%%%%%%%%%%%%%%%
%%% thm:Kam1 %%%
%%%%%%%%%%%%%%%%
%
\begin{thm} \label{thm:Kam1}
{\rm (1)} 
Let $\lambda \in X_{*}(T)_{+}$ and $\nu \in X_{*}(T)$ be 
such that $\Gr^{\lambda} \cap S_{\nu} \ne \emptyset$. 
If $\mu_{\bullet}=(\mu_{w})_{w \in W}$ denotes the GGMS datum 
of an MV polytope $P \in \mv(\lambda)_{\nu}$, that is, 
$P=P(\mu_{\bullet}) \in \mv(\lambda)_{\nu}$, then 
\begin{equation*}
\bb(\mu_{\bullet}):=
\ol{ \bigcap_{w \in W} S^{w}_{\mu_{w}} } \subset \ol{\Gr^{\lambda}}
\end{equation*}
is an MV cycle that belongs to $\CZ(\lambda)_{\nu}$. 

{\rm (2)} 
Let $\lambda \in X_{*}(T)_{+}$. 
For an MV polytope $P=P(\mu_{\bullet}) \in \mv(\lambda)$ 
with GGMS datum $\mu_{\bullet}$, 
we set $\Phi_{\lambda}(P):=\bb(\mu_{\bullet})$. Then, 
the map $\Phi_{\lambda}:\mv(\lambda) \rightarrow \CZ(\lambda)$, 
$P \mapsto \Phi_{\lambda}(P)$, is a bijection 
from $\mv(\lambda)$ onto $\CZ(\lambda)$ 
such that $\Phi_{\lambda}(\mv(\lambda)_{\nu})=
\CZ(\lambda)_{\nu}$ for all $\nu \in X_{\ast}(T)$
with $\Gr^{\lambda} \cap S_{\nu} \ne \emptyset$. 
In particular, for each MV cycle $\bb \in \CZ(\lambda)$, 
there exists a unique MV datum $\mu_{\bullet}$ 
such that $\bb=\bb(\mu_{\bullet})$, and in this case, 
the moment map image $P(\bb)$ of the MV cycle $\bb=\bb(\mu_{\bullet})$
is identical to the MV polytope $P(\mu_{\bullet}) \in \mv(\lambda)$. 
\end{thm}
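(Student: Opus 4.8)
The plan is to follow Kamnitzer \cite{Kam1}: one moves between the combinatorics of an MV datum and the geometry of $\Gr$ by recording the generic position of an MV cycle relative to \emph{all} the Iwasawa decompositions, and conversely by cutting out a cycle using Iwasawa conditions. The first ingredient is a ``generic Iwasawa position'' map. For fixed $w \in W$, the decomposition $\ol{\Gr^{\lambda}} = \bigsqcup_{\gamma}\bigl(\ol{\Gr^{\lambda}} \cap S_{\gamma}^{w}\bigr)$ has only finitely many nonempty locally closed parts, since $\gamma$ must be a weight of $L(\lambda)$; hence for any irreducible closed subvariety $Z \subseteq \ol{\Gr^{\lambda}}$ there is a unique coweight $\mu_{w}(Z)$ such that $Z \cap S_{\mu_{w}(Z)}^{w}$ is open and dense in $Z$. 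Applied to an MV cycle $\bb \in \CZ(\lambda)_{\nu}$ this produces $\mu_{\bullet}(\bb) = (\mu_{w}(\bb))_{w \in W}$. Because $\Gr^{\lambda} \cap S_{\nu}$ is dense in $\bb$ we get $\mu_{e}(\bb) = \nu$; and because $\Gr^{\lambda} \cap S_{\lambda}^{w_{0}}$ is dense in the irreducible variety $\Gr^{\lambda}$ — it is pure of dimension $2\pair{\lambda}{\rho} = \dim \Gr^{\lambda}$ by the Mirkovi\'c--Vilonen dimension formula, after conjugating by $\dot{w}_{0}$ — we get $\mu_{w_{0}}(\bb) = \lambda$.

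The second ingredient pins down the vertices. Each $S_{\mu_{w}}^{w}$ carries a contracting $\BC^{\ast}$-action (via a regular cocharacter of $T$, as in \cite{MV2}) whose unique fixed point is $[t^{\mu_{w}}]$; since $\bb$ is $T$-stable (being a component of the $T$-stable set $\ol{\Gr^{\lambda} \cap S_{\nu}}$), taking the limit of a point of the dense locus $\bb \cap S_{\mu_{w}}^{w}$ shows $[t^{\mu_{w}(\bb)}] \in \bb$ for every $w$. For $x,\,z \in W$ we then have $[t^{\mu_{z}}] \in \bb \subseteq \ol{S_{\mu_{x}}^{x}}$ and $[t^{\mu_{z}}] \in S_{\mu_{z}}^{x}$, so the closure relation \eqref{eq:Snuw} forces $x^{-1} \cdot \mu_{z} - x^{-1} \cdot \mu_{x} \in Q^{\vee}_{+}$; i.e.\ $\mu_{\bullet}(\bb)$ is a GGMS datum. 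Running the identical argument with an arbitrary $T$-fixed point $[t^{\gamma}] \in \bb$ in place of $[t^{\mu_{z}}]$ gives $x^{-1} \cdot \gamma - x^{-1} \cdot \mu_{x} \in Q^{\vee}_{+}$ for all $x$, i.e.\ $\gamma \in P(\mu_{\bullet}(\bb))$ by \eqref{eq:poly}; together with $[t^{\mu_{w}}] \in \bb$ for all $w$ this yields $P(\bb) = P(\mu_{\bullet}(\bb))$, which is the last assertion of the theorem once $\bb$ has been identified with $\bb(\mu_{\bullet})$.

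It remains to check that $\mu_{\bullet}(\bb)$ is actually an MV datum and that $\Phi_{\lambda}$ is a bijection. For the first point one verifies the $2$-move and $3$-move relations of Definition~\ref{dfn:MV} (and their type-$B_{2}$ and type-$G_{2}$ analogues) by fixing a reduced word and reducing to rank-two subconfigurations, where the edge-length identities follow from an explicit computation in type $A_{2}$ (resp.\ $B_{2}$, $G_{2}$). Injectivity of $\Phi_{\lambda}$ is then immediate, since $\bb(\mu_{\bullet})$ recovers its own dense Iwasawa strata and hence $\mu_{\bullet}$. For surjectivity one shows, conversely, that for any $P(\mu_{\bullet}) \in \mv(\lambda)_{\nu}$ the GGMS stratum $A(\mu_{\bullet}) := \Gr^{\lambda} \cap \bigcap_{w \in W} S_{\mu_{w}}^{w}$ is nonempty, irreducible, and of dimension $\pair{\lambda - \nu}{\rho}$, and that $\bigcap_{w \in W} S_{\mu_{w}}^{w} \subseteq \ol{\Gr^{\lambda}}$ (using $P(\mu_{\bullet}) \subseteq \Conv(W \cdot \lambda)$, part of the definition of $\mv(\lambda)$); then $\ol{A(\mu_{\bullet})} = \ol{\bigcap_{w} S_{\mu_{w}}^{w}} = \bb(\mu_{\bullet})$ is an irreducible closed subset of $\ol{\Gr^{\lambda} \cap S_{\nu}}$ of the maximal possible dimension, hence an MV cycle in $\CZ(\lambda)_{\nu}$, with $\mu_{\bullet}(\ol{A(\mu_{\bullet})}) = \mu_{\bullet}$; combined with the fact that every MV cycle $\bb$ is $\ol{A(\mu_{\bullet}(\bb))}$ (its component is recovered by the same maximal-dimension argument), this gives the desired bijection, and compatibility with the weight stratifications is automatic from $\mu_{e} = \nu$.

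The main obstacle is the claim just used: that for an MV datum $\mu_{\bullet}$ the stratum $A(\mu_{\bullet})$ is nonempty, irreducible and of the expected dimension — this fails for a general GGMS datum, and is precisely where the $2$-move and $3$-move conditions are needed. Concretely one fixes a reduced word $\bi = (i_{1},\dots,i_{m})$, builds $A(\mu_{\bullet})$ by an iterated $SL_{2}$-type construction governed by the edge lengths $\Ni{\bullet}$, computes its dimension, and then uses the MV relations to see that reduced words linked by $2$- and $3$-moves produce the same subvariety — so that $A(\mu_{\bullet})$ is genuinely cut out by all the Iwasawa conditions simultaneously and is irreducible. I expect this patching-over-reduced-words argument, resting on the rank-two base cases, to be the technical heart of the proof.
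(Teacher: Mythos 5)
The paper does not prove this theorem: it is Kamnitzer's result, stated and cited directly from \cite{Kam1} (with the moment-polytope characterization and the $P\subset\Conv(W\cdot\lambda)$ condition drawn from \cite{A}), so there is no ``paper's own proof'' to compare against. Read as a blind reconstruction of Kamnitzer's argument, your sketch is broadly on target: the generic-Iwasawa-position map $\bb\mapsto\mu_\bullet(\bb)$, the use of the closure relations \eqref{eq:Snuw} together with $T$-fixed points to get a GGMS datum and the moment-map identity $P(\bb)=P(\mu_\bullet(\bb))$, reduction to rank two for the $2$- and $3$-move relations, and the GGMS stratum $A(\mu_\bullet)$ for surjectivity, whose nonemptiness/irreducibility/dimension you correctly identify as the technical heart. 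Two points where the sketch as written is not yet a proof. First, $\mu_{w_0}(\bb)=\lambda$ does not follow merely from the fact that $\Gr^{\lambda}\cap S^{w_0}_{\lambda}$ is open dense in $\Gr^{\lambda}$: an open dense subset of an irreducible variety need not meet a given proper irreducible closed subvariety such as $\bb\cap\Gr^{\lambda}$, whose dimension is $\pair{\lambda-\nu}{\rho}$ rather than $2\pair{\lambda}{\rho}$. One needs an additional input — e.g.\ the $U(\CO)$-stability of $\bb$ (equivalently, stability for the Iwahori group), which forces $[t^{\lambda}]$ to lie in every cycle of $\CZ(\lambda)$ and lets one show that $\bb$ actually meets $S^{w_0}_{\lambda}$ — and both Anderson and Kamnitzer supply exactly such an argument. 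Second, Kamnitzer establishes the key properties of $A(\mu_\bullet)$ not by an iterated $SL_2$-construction inside $\Gr$ governed by the $\Ni{\bullet}$ directly, but via the Berenstein--Zelevinsky change-of-reduced-word formulas for generalized minors (tropicalized), which is also precisely where the non-simply-laced ($B_2$, $G_2$) inequalities enter; your phrasing gestures at this but the mechanism is the BZ-data combinatorics rather than a patching of explicit varieties. Since the present paper merely quotes the theorem, neither point bears on the paper's own logic.
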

%
%%%%%%%%%%%%%%%%%%
%%% rem:moment %%%
%%%%%%%%%%%%%%%%%%
%
\begin{rem}[{\cite[\S2.2]{Kam1}}] \label{rem:moment}
For $\nu \in X_{*}(T)$ and $w \in W$, 
the ``moment map image'' $P(\ol{S_{\nu}^{w}})$ of 
$\ol{S_{\nu}^{w}}$ is, by definition, 
the convex hull in $\Fh_{\BR}$ of the set
$\bigl\{
  \gamma \in X_{*}(T) \subset \Fh_{\BR} \mid 
  [t^{\gamma}] \in \ol{S_{\nu}^{w}}
\bigr\} \subset \Fh_{\BR}$,
which is identical to the (shifted) convex cone
$\bigl\{
 v \in \Fh_{\BR} \mid 
 w^{-1} \cdot v - w^{-1} \cdot \nu \in 
 \textstyle{\sum_{j \in I}\BR_{\ge 0}h_{j}}
\bigr\}$. 
\end{rem}

%==============================%
%     START SUBSECTION 0204    %
%==============================%
%
\subsection{Lusztig-Berenstein-Zelevinsky crystal structure.}
\label{subsec:MVcry}

We keep the notation and assumptions of \S\ref{subsec:MV}.
For an MV datum $\mu_{\bullet}=(\mu_{w})_{w \in W}$ and $j \in I$, 
we denote by $f_{j}\mu_{\bullet}$ 
(resp., $e_{j}\mu_{\bullet}$ if $\mu_{e} \ne \mu_{s_{j}}$; 
note that $\mu_{s_{j}}-\mu_{e} \in 
\BZ_{\ge 0}h_{j}$ by \eqref{eq:length})
a unique MV datum $\mu_{\bullet}'=(\mu_{w}')_{w \in W}$ 
such that $\mu_{e}'=\mu_{e}-h_{j}$ 
(resp., $\mu_{e}'=\mu_{e}+h_{j}$) and 
$\mu_{w}'=\mu_{w}$ for all $w \in W$ with $s_{j}w < w$ 
(see \cite[Theorem~3.5]{Kam2} and its proof); 
note that $\mu_{w_{0}}'=\mu_{w_{0}}$ and 
$\mu_{s_j}'=\mu_{s_j}$. 

Let $\lambda \in X_{\ast}(T)_{+} \subset \Fh_{\BR}$ 
be a dominant coweight. 
Following \cite[\S6.2]{Kam2}, we endow $\mv(\lambda)$ 
with the Lusztig-Berenstein-Zelevinsky (LBZ for short) 
crystal structure for $U_{q}(\Fg^{\vee})$
as follows. 
Let $P=P(\mu_{\bullet}) \in \mv(\lambda)$ be 
an MV polytope with GGMS datum 
$\mu_{\bullet}=(\mu_{w})_{w \in W}$.
The weight $\wt(P)$ of $P$ is, by definition, 
equal to the vertex $\mu_{e} \in \lambda-Q^{\vee}_{+}$. 
For each $j \in I$, 
we define the lowering Kashiwara operator
$f_{j}:\mv(\lambda) \cup \{\bzero\} \rightarrow 
 \mv(\lambda) \cup \{\bzero\}$ and 
the raising Kashiwara operator
$e_{j}:\mv(\lambda) \cup \{\bzero\} \rightarrow 
 \mv(\lambda) \cup \{\bzero\}$ by: 
\begin{align*}
e_{j}\bzero=f_{j}\bzero & :=\bzero, \\[3mm]
f_{j}P=f_{j}P(\mu_{\bullet}) & :=
\begin{cases}
 P(f_{j}\mu_{\bullet}) 
 & \text{if $P(f_{j}\mu_{\bullet}) \subset 
         \Conv (W \cdot \lambda)$}, \\[1.5mm]
 \bzero & \text{otherwise}, 
 \end{cases} \\[3mm]
e_{j}P=e_{j}P(\mu_{\bullet}) & :=
\begin{cases}
 P(e_{j}\mu_{\bullet}) & 
 \text{if $\mu_{e} \ne \mu_{s_{j}}$ 
 (i.e., $\mu_{s_{j}}-\mu_{e} \in \BZ_{> 0}h_{j}$)}, \\[1.5mm]
 \bzero & \text{otherwise}, 
 \end{cases}
\end{align*}
where $\bzero$ is an additional element, 
not contained in $\mv(\lambda)$. 
For $j \in I$, we set 
$\ve_{j}(P):=
 \max \bigl\{k \in \BZ_{\ge 0} \mid e_{j}^{k}P \ne \bzero \bigr\}$ and 
$\vp_{j}(P):=
 \max \bigl\{k \in \BZ_{\ge 0} \mid f_{j}^{k}P \ne \bzero \bigr\}$.

%%%%%%%%%%%%%%%%%%%
%%% thm:kam-int %%%
%%%%%%%%%%%%%%%%%%%
%
\begin{thm}[{\cite[Theorem~6.4]{Kam2}}] \label{thm:kam-int}
The set $\mv(\lambda)$, equipped with the maps 
$\wt$, $e_{j},\,f_{j} \ (j \in I)$, and 
$\ve_{j},\,\vp_{j} \ (j \in I)$ above, is 
a crystal for $U_{q}(\Fg^{\vee})$. 
Moreover, there exists a unique isomorphism 
$\Psi_{\lambda}:\CB(\lambda) 
 \stackrel{\sim}{\rightarrow} 
 \mv(\lambda)$ of crystals for $U_{q}(\Fg^{\vee})$. 
\end{thm}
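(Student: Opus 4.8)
The statement to be proved is Theorem~\ref{thm:kam-int}: that $\mv(\lambda)$, with the maps $\wt$, $e_j$, $f_j$, $\ve_j$, $\vp_j$ defined above, is a crystal for $U_q(\Fg^\vee)$, and that it is isomorphic (uniquely) to $\CB(\lambda)$. The plan is to verify the crystal axioms directly from the combinatorial definitions of the Kashiwara operators in terms of MV data, and then to deduce the isomorphism with $\CB(\lambda)$ by exhibiting $\mv(\lambda)$ as a lowest-weight realization generated from the unique extremal element $P(\mu_\bullet)$ with $\mu_e = w_0 \cdot \lambda$. First I would record that the weight map is well-defined with values in $\lambda - Q^\vee_+$: this is immediate since $\mu_{w_0} = \lambda$ and $\mu_e - \lambda = -(\lambda - \mu_e) \in -Q^\vee_+$ by Remark~\ref{rem:GGMS-Q}. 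Next I would check that $e_j$ and $f_j$ genuinely land in $\mv(\lambda) \cup \{\bzero\}$: the key input here is the fact, recalled just before the theorem (from \cite[Theorem~3.5]{Kam2}), that $f_j\mu_\bullet$ and $e_j\mu_\bullet$ are again MV data, so the only thing that can go wrong is the containment in $\Conv(W\cdot\lambda)$, which is precisely the condition built into the definition.

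The heart of the verification is the compatibility of $\ve_j, \vp_j$ with $e_j, f_j$ and with the weight. I would first establish the local string picture: fixing $j$, and writing $c := \langle \mu_e, \alpha_j\rangle$ together with the edge length $n$ determined by $\mu_{s_j} - \mu_e = n\, h_j$ (from \eqref{eq:length}), one shows that applying $e_j$ repeatedly decreases the $h_j$-coordinate of $\mu_e$ one step at a time until $\mu_e$ reaches $\mu_{s_j}$ (at which point $e_j$ returns $\bzero$), while $f_j$ increases it; crucially $\mu_w$ for $w$ with $s_j w < w$ is unchanged throughout, so the "$j$-string" through $P$ is genuinely a string of MV polytopes differing only in the $j$-direction. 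From this one reads off $\ve_j(P) = n$ and deduces $\wt(f_j P) = \wt(P) - \alpha_j$, $\wt(e_j P) = \wt(P) + \alpha_j$ (note the passage to $\Fg^\vee$: $\alpha_j$ is the simple root of $\Fg^\vee$ paired against the coroot $h_j$ of $\Fg$), and $\vp_j(P) = \ve_j(P) + \langle \wt(P), \alpha_j\rangle$ — recovering exactly the crystal axiom $\vp_j - \ve_j = \langle \wt, \alpha_j^\vee\rangle$. The relations $e_j f_j P = P$ when $f_j P \neq \bzero$ and $f_j e_j P = P$ when $e_j P \neq \bzero$ then follow because $f_j$ and $e_j$ are, on the level of the datum $\mu_\bullet$, literally the maps $\mu_e \mapsto \mu_e \mp h_j$, which are mutually inverse wherever both are defined; one must only check that the $\Conv(W\cdot\lambda)$-constraint is consistent, i.e. that if $e_j P \neq \bzero$ then $f_j(e_j P)$ satisfies the containment, which holds since $f_j(e_j P)$ has the same datum as $P$.

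For the second assertion, I would argue that $\mv(\lambda)$ is connected and generated by a single extremal element under the $f_j$: the MV polytope with $\mu_e = w_0\cdot\lambda$ is the unique element killed by every $e_j$, and every $P \in \mv(\lambda)$ is obtained from it by a sequence of $f_j$'s (this can be seen via the BZ-data description \eqref{eq:polybz}, or by induction on $\langle \lambda - \mu_e, \rho\rangle$ using Lemma~\ref{lem:bz}). A connected seminormal crystal of $U_q(\Fg^\vee)$ with highest-weight vertex of weight $\lambda$ and the right character is isomorphic to $\CB(\lambda)$; since $\CB(\lambda)$ is the unique such crystal up to a unique isomorphism, one gets $\Psi_\lambda$. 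I expect the main obstacle to be the careful bookkeeping in the local $j$-string analysis — specifically, showing that the polytopes $P(e_j^k \mu_\bullet)$ all remain inside $\Conv(W\cdot\lambda)$ for $0 \le k \le \ve_j(P)$, so that the string is unbroken and $\ve_j$ equals the naive edge length $n$; this requires combining the GGMS inequalities \eqref{eq:GGMS} with the highest-vertex normalization, and is the one place where a genuinely geometric input (the characterization of MV polytopes of highest vertex $\lambda$ via \cite[Proposition~7]{A}) cannot be replaced by pure combinatorics.
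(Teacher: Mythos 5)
The paper does not prove this statement: it is quoted verbatim as Kamnitzer's \cite[Theorem~6.4]{Kam2}, so there is no in-paper argument to compare against. Your proposal is therefore a reconstruction of Kamnitzer's result, and while its broad outline (verify the crystal axioms directly, then identify $\mv(\lambda)$ with $\CB(\lambda)$ by connectedness and uniqueness of the highest-weight crystal) is a legitimate strategy, it contains two genuine errors and one substantive gap.

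First, you have the highest/lowest weight picture reversed. The element of $\mv(\lambda)$ killed by every $e_j$ is the single-point polytope with $\mu_w = \lambda$ for all $w$, i.e.\ $\mu_e = \lambda$ and $\ve_j = 0$ for all $j$; this is the highest-weight vector of weight $\lambda$. The polytope with $\mu_e = w_0 \cdot \lambda$ is the full $\Conv(W \cdot \lambda)$, which is the lowest-weight element, killed by every $f_j$. You assert that the element with $\mu_e = w_0\cdot\lambda$ is killed by every $e_j$ and that $\mv(\lambda)$ is generated from it by $f_j$'s, which is incoherent: a lowest-weight vector generates by $e_j$'s, a highest-weight vector by $f_j$'s.

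Second, the roots of $\Fg^{\vee}$ are the $h_j$, not the $\alpha_j$. With the paper's conventions (the root datum of $\Fg^{\vee}$ is $({}^{t}A,\,\Pi^{\vee},\,\Pi,\,\Fh,\,\Fh^{\ast})$ and $\wt(P) = \mu_e \in \Fh_{\BR}$), one has $\wt(f_j P) = \mu_e - h_j$, and the seminormality axiom reads $\vp_j(P) - \ve_j(P) = \pair{\mu_e}{\alpha_j}$ with $\alpha_j \in \Fh^{\ast}$ the $j$-th simple \emph{coroot} of $\Fg^{\vee}$. Your parenthetical claiming $\alpha_j$ is the simple root of $\Fg^{\vee}$ and writing the axiom as $\langle \wt, \alpha_j^{\vee}\rangle$ has the roles of $\Pi$ and $\Pi^{\vee}$ swapped, and the latter pairing is not even well-formed.

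Third, and most substantively, the identification $\ve_j(P) = n$ (with $\mu_{s_j} - \mu_e = n\,h_j$) and the matching value of $\vp_j(P)$ are not established. The direction that can be made to work cheaply is the $e_j$-string: by Fact~\ref{fact:fjP} one has $f_j P \supset P$, hence $e_j P \subset P$ whenever defined, so every $e_j^k P$ remains in $\Conv(W\cdot\lambda)$ and the only obstruction to applying $e_j$ is $\mu_e = \mu_{s_j}$; this gives $\ve_j(P) = n$. The $f_j$-direction is the hard one: $f_j$ enlarges the polytope, and you must show that the number of applications before leaving $\Conv(W\cdot\lambda)$ equals $n + \pair{\mu_e}{\alpha_j}$. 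You flag this as ``bookkeeping'' and ``the main obstacle,'' which is honest, but as written the argument does not close. Filling it is precisely where Kamnitzer uses the correspondence between BZ data and Lusztig/Kashiwara data for $\CB(\infty)$, together with the containment $\mv(\lambda) \subset \Conv(W\cdot\lambda)$ being controlled by the BZ inequalities \eqref{eq:polybz}; a purely local $j$-string analysis of $\mu_e$ and $\mu_{s_j}$ alone is not sufficient.
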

%
%%%%%%%%%%%%%%%%%%%%
%%% rem:LBZ=BFG %%%%
%%%%%%%%%%%%%%%%%%%%
%
\begin{rem} \label{rem:LBZ=BFG}
Kamnitzer \cite[Theorem~4.7]{Kam2} proved that 
for each $\lambda \in X_{*}(T)_{+}$, the bijection 
$\Phi_{\lambda}:\mv(\lambda) \rightarrow \CZ(\lambda)$ in 
Theorem~\ref{thm:Kam1}\,(2) intertwines the LBZ crystal 
structure on $\mv(\lambda)$ and the crystal structure on $\CZ(\lambda)$ 
defined in \cite{BrGa} (and \cite{BFG}). 
\end{rem}

For $P \in \mv(\lambda)$ and $j \in I$, we set
\begin{equation*}
f_{j}^{\max}P:=f_{j}^{\vp_{j}(P)}P. 
\end{equation*}

%==============================%
%     START SUBSECTION 0205    %
%==============================%
%
\subsection{Description of MV polytopes in terms of Kashiwara data.}
\label{subsec:ehrig}

The following description of MV polytopes, 
due to Ehrig, is obtained as a corollary of 
his main result \cite[Theorem~1.1]{E}.
%
%%%%%%%%%%%%%%%%%%
%%% prop:ehrig %%%
%%%%%%%%%%%%%%%%%%
%
\begin{prop} \label{prop:ehrig}
Let $\lambda \in X_{\ast}(T)_{+} \subset \Fh_{\BR}$, 
and $P=P(\mu_{\bullet}) \in \mv(\lambda)$ 
an MV polytope with GGMS datum $\mu_{\bullet}=(\mu_{w})_{w \in W}$. 
Let $x \in W$, and $x=s_{i_{1}}s_{i_{2}} \cdots s_{i_{k}}$ 
an arbitrary reduced expression of $x$. Then, we have 
%
%%%%%%%%%%%%%%%%
%%% eq:ehrig %%%
%%%%%%%%%%%%%%%%
%
\begin{equation} \label{eq:ehrig}
\mu_{x}=x \cdot 
 \wt (f_{i_{k}}^{\max} \cdots f_{i_{2}}^{\max}f_{i_{1}}^{\max}P).
\end{equation}
\end{prop}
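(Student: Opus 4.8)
The plan is to prove \eqref{eq:ehrig} by induction on the length $k = \ell(x)$, reducing each step to a single simple reflection. The base case $k=0$ is immediate: $x=e$, there are no Kashiwara operators to apply, and $\mu_e = \wt(P)$ by the definition of the LBZ crystal structure recalled in \S\ref{subsec:MVcry}. For the inductive step, write $x = s_{i_1} x'$ with $x' = s_{i_2}\cdots s_{i_k}$ a reduced expression of length $k-1$, so $s_{i_1} x > x'$, and let $j := i_1$. I would first reduce to understanding how $\mu_x$ relates to $\mu_{x'}$ under the single reflection $s_j$, and how $f_j^{\max}$ interacts with the induction.

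The key observation I would isolate is a one-step statement: for any MV polytope $Q = P(\nu_\bullet) \in \mv(\mu)$ (say $\mu = \wt(P)$ after peeling off earlier operators) and any $j \in I$, one has $\nu_{s_j} = s_j \cdot \wt(f_j^{\max}Q)$. This is exactly \eqref{eq:ehrig} for $x = s_j$, and it should follow directly from the definitions: by \eqref{eq:length}, $\nu_{s_j} - \nu_e \in \BZ_{\ge 0}h_j$, and $\vp_j(Q)$ — the maximal number of times $f_j$ can be applied before leaving $\Conv(W\cdot\mu)$ — is governed precisely by the edge length along the $h_j$-direction at the vertex $\nu_e$, which by the ``length formula'' \eqref{eq:n} (applied to a reduced word beginning with $j$) equals $\pair{\nu_{s_j} - \nu_e}{\alpha_j^\vee}$-type data; hence $\wt(f_j^{\max}Q) = \nu_e - \vp_j(Q)h_j$, and applying $s_j$ sends $\nu_e$ (which lies on the $h_j$-edge through $\nu_{s_j}$) to $\nu_{s_j}$. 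Here one must use that $f_j$ only moves the vertex $\mu_e$ along the $h_j$-direction while fixing $\mu_w$ for $s_j w < w$ (in particular fixing $\mu_{s_j}$), as recorded in \S\ref{subsec:MVcry}; this is where I expect to lean most heavily on Kamnitzer's \cite{Kam2}.

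Granting the one-step statement, the inductive step proceeds as follows. Set $P' := f_{i_1}^{\max}P = f_j^{\max}P$, an MV polytope in $\mv(\wt(P'))$ whose GGMS datum I denote $\mu_\bullet'$. By the inductive hypothesis applied to $x'$ (of length $k-1$) and its reduced expression $s_{i_2}\cdots s_{i_k}$, together with the polytope $P'$, we get $\mu_{x'}' = x' \cdot \wt(f_{i_k}^{\max}\cdots f_{i_2}^{\max}P')$, and the right-hand side is exactly $x' \cdot \wt(f_{i_k}^{\max}\cdots f_{i_1}^{\max}P)$, so it remains to show $\mu_x = x \cdot (x')^{-1} \cdot \mu_{x'}' = s_j \cdot \mu_{x'}'$. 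Now I need the compatibility between $\mu_\bullet$ (the datum of $P$) and $\mu_\bullet'$ (the datum of $f_j^{\max}P$): since $s_j x' > x'$, by the description of $f_j$ in \S\ref{subsec:MVcry} one has $\mu_w' = \mu_w$ for all $w$ with $s_j w < w$, which includes all $w \le x'$ in the relevant part; in particular the ``$s_j$-face'' data agree, so that effectively $P$ and $P'$ have the same polytope on the half determined by $x'$, and applying the one-step statement for $s_j$ to the polytope $P$ in the guise of $P'$ (after peeling) yields $\mu_x = s_j \cdot \mu_{x'}'$. The main obstacle, and the step I would spend the most care on, is precisely bookkeeping this last compatibility: making sure that applying $f_j^{\max}$ first and then the remaining operators gives the same vertex $\mu_x$ as the order of reflections in the reduced word dictates — i.e., that the recursion on $x$ via $x = s_{i_1}(s_{i_2}\cdots s_{i_k})$ is consistent with the recursion on the Kashiwara operators $f_{i_1}^{\max}$ outermost. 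Once this is set up cleanly, the proof reduces to the two facts quoted above (the length formula \eqref{eq:n} and Kamnitzer's description of $f_j$), plus the trivial observation that $\mu_w$ is a vertex of $P$ lying on the appropriate edge, and everything fits together.
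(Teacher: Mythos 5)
Your proposal takes a genuinely different, inductive route from the paper. The paper's proof extends the reduced word of $x$ to a full reduced word $\bi \in R(w_0)$, invokes Kamnitzer's identity $p_l = M_{\wi{l-1}\cdot\Lambda_{i_l}} - M_{\wi{l}\cdot\Lambda_{i_l}}$ for the $\bi$-Kashiwara datum (\cite[Theorem~6.6]{Kam2}, quoted as \eqref{eq:kam-kas}), and verifies \eqref{eq:ehrig} directly by pairing both sides against the basis $x\cdot\Lambda_j$, $j\in I$, and telescoping.

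Unfortunately there is a genuine gap in your inductive step. Writing $j=i_1$, $P' = f_j^{\max}P$ (with GGMS datum $\mu_\bullet'$) and $x = s_jx'$, and invoking the inductive hypothesis for $(P',x')$, what remains is exactly $\mu_x = s_j\cdot\mu_{x'}'$. Since $s_jx=x' < x$, Kamnitzer's description of $f_j$ gives $\mu_x' = \mu_x$, so the claim is equivalent to $\mu_{s_jx'}' = s_j\cdot\mu_{x'}'$. This is \emph{not} an instance of your one-step statement: the one-step statement is precisely the case $x'=e$, namely $\mu_{s_j}' = s_j\cdot\mu_e'$, which is equivalent to the single scalar condition $\vp_j(P')=0$, i.e.\ $M'_{\Lambda_j}=M'_{s_j\Lambda_j}$. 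For general $x'$ with $s_jx'>x'$, the required identity unpacks to $M'_{s_jx'\cdot\Lambda_i}=M'_{x'\cdot\Lambda_i}$ for every $i\in I$ — a whole family of equalities among BZ data that is not implied by the single equality $\vp_j(P')=0$. One also cannot recover it from \eqref{eq:ehrig} applied to $P'$ at $s_jx'$ (using $f_j^{\max}P'=P'$), because $\ell(s_jx')=k$, making the argument circular. The sentence beginning ``in particular the $s_j$-face data agree\dots'' is exactly where the nontrivial content of the proposition lives, and filling it in appears to require something equivalent to \cite[Theorem~6.6]{Kam2}, which is precisely the input the paper uses from the outset.

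A smaller inaccuracy in the same spirit: your justification of the one-step statement identifies $\vp_j(Q)$ with the edge length from $\nu_e$ to $\nu_{s_j}$; that edge length is in fact $\ve_j(Q)=\pair{\nu_{s_j}-\nu_e}{\Lambda_j}$, not $\vp_j(Q)$. The conclusion $\nu_{s_j}=s_j\cdot\wt(f_j^{\max}Q)$ does hold, because $s_j\cdot\bigl(\nu_e-\vp_j(Q)h_j\bigr) = \nu_e - \pair{\nu_e}{\alpha_j}h_j + \vp_j(Q)h_j = \nu_e + \ve_j(Q)h_j = \nu_{s_j}$ by the crystal axiom $\vp_j(Q)-\ve_j(Q)=\pair{\wt Q}{\alpha_j}$, but keeping $\vp_j$ and $\ve_j$ distinct is essential to seeing what your inductive step actually needs and why it is not free.
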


Because this result itself follows easily from 
Kamnitzer's result \cite[Theorem~6.6]{Kam2} 
describing Kashiwara data in terms of BZ data, 
we include its short proof for the convenience of the reader; 
in fact, since the reverse implication is
shown in the proof of \cite[Corollary~7.5]{E2}, 
these two results are indeed equivalent. 

\begin{proof}[Proof of Proposition~\ref{prop:ehrig}]
Take $i_{k+1},\,i_{k+2},\,\dots,\,i_{m} \in I$ in such a way that 
\begin{equation*}
\bi=(i_{1},\,i_{2},\,\dots,\,i_{k},\,i_{k+1},\,i_{k+2},\,\dots,\,i_{m})
\end{equation*}
is a reduced word for the longest element $w_{0} \in W$, i.e., 
$\bi \in R(w_{0})$. We define a sequence 
$(p_{1},\,p_{2},\,\dots,\,p_{m}) \in \BZ_{\ge 0}^{m}$ 
of nonnegative integers (called the $\bi$-Kashiwara datum of $P$) by:
%
%%%%%%%%%%%%%%%
%%% eq:ikas %%%
%%%%%%%%%%%%%%%
%
\begin{equation} \label{eq:ikas}
p_{l}:=\vp_{i_{l}}(
  f_{i_{l-1}}^{\max} \cdots f_{i_{2}}^{\max}f_{i_{1}}^{\max}P)
\quad
\text{for $1 \le l \le m$}.
\end{equation}
Then we know from \cite[Theorem~6.6]{Kam2} that 
%
%%%%%%%%%%%%%%%%%%
%%% eq:kam-kas %%%
%%%%%%%%%%%%%%%%%%
%
\begin{equation} \label{eq:kam-kas}
p_{l}=
 M_{\wi{l-1} \cdot \Lambda_{i_{l}}}-
 M_{\wi{l} \cdot \Lambda_{i_{l}}}
\quad
\text{for $1 \le l \le m$},
\end{equation}
where $M_{\bullet}=(M_{\gamma})_{\gamma \in \Gamma}$ denotes 
the BZ datum of $P=P(\mu_{\bullet}) \in \mv(\lambda)$. 

Because the $x \cdot \Lambda_{j}$, $j \in I$, form a basis of $\Fh^{\ast}$, 
in order to show the equation \eqref{eq:ehrig}, 
it suffices to prove that 
%
%%%%%%%%%%%%%%%%%
%%% eq:ehrig1 %%%
%%%%%%%%%%%%%%%%%
%
\begin{equation} \label{eq:ehrig1}
\Bpair{x \cdot 
 \wt (f_{i_{k}}^{\max} \cdots f_{i_{2}}^{\max}f_{i_{1}}^{\max}P)}
{x \cdot \Lambda_{j}}=
\pair{\mu_{x}}{x \cdot \Lambda_{j}}
\quad
\text{for all $j \in I$}. 
\end{equation}
Fix $j \in I$. By definition, the right-hand side of \eqref{eq:ehrig1} is 
equal to $M_{x \cdot \Lambda_{j}}$. 
Also, we see from the definition \eqref{eq:ikas} of 
the $\bi$-Kashiwara datum and the equation \eqref{eq:kam-kas} that 
\begin{align*}
& 
\Bpair{x \cdot \wt (f_{i_{k}}^{\max} \cdots f_{i_{2}}^{\max}f_{i_{1}}^{\max}P)}
{x \cdot \Lambda_{j}}=
\Bpair{\wt (f_{i_{k}}^{\max} \cdots f_{i_{2}}^{\max}f_{i_{1}}^{\max}P)}
{\Lambda_{j}} \\[3mm]
& \qquad =
\Bpair{\wt (f_{i_{k}}^{p_{k}} \cdots f_{i_{2}}^{p_{2}}f_{i_{1}}^{p_{1}}P)}
{\Lambda_{j}} \qquad \text{by \eqref{eq:ikas}} \\[5mm]
& 
\qquad 
=
\left\langle 
 \wt P - \sum_{l=1}^{k}p_{l} h_{i_{l}}, \, \Lambda_{j}
\right\rangle \\[3mm]
& 
\qquad
= 
\left\langle 
\wt P - \sum_{l=1}^{k} 
(M_{\wi{l-1} \cdot \Lambda_{i_{l}}}-
 M_{\wi{l} \cdot \Lambda_{i_{l}}}) h_{i_{l}}, \, \Lambda_{j}
\right\rangle 
\quad \text{by \eqref{eq:kam-kas}} \\[3mm]
& \qquad = \pair{\wt P}{\Lambda_{j}} -
\sum_{l=1}^{k} 
(M_{\wi{l-1} \cdot \Lambda_{i_{l}}}-
 M_{\wi{l} \cdot \Lambda_{i_{l}}})\delta_{i_{l},\,j}.
\end{align*}
Since $\pair{\wt P}{\Lambda_{j}}=\pair{\mu_{e}}{\Lambda_{j}}=M_{\Lambda_{j}}$ 
by the definition of $\wt P$, the left-hand side of \eqref{eq:ehrig1} is equal to 
\begin{equation*}
M_{\Lambda_{j}} -
\sum_{l=1}^{k} 
(M_{\wi{l-1} \cdot \Lambda_{i_{l}}}-
 M_{\wi{l} \cdot \Lambda_{i_{l}}})\delta_{i_{l},j}.
\end{equation*}
If we write the set 
$\bigl\{1 \le l \le k \mid i_{l}=j\bigr\}$ as: 
$\bigl\{a_{1} < a_{2} < \cdots < a_{s}\bigr\}$, 
then we have
\begin{equation*}
M_{\Lambda_{j}} -
\sum_{l=1}^{k} 
(M_{\wi{l-1} \cdot \Lambda_{i_{l}}}-
 M_{\wi{l} \cdot \Lambda_{i_{l}}})\delta_{i_{l},j}
=
M_{\Lambda_{j}}-
\sum_{t=1}^{s} 
(M_{\wi{a_{t}-1} \cdot \Lambda_{j}}-M_{\wi{a_{t}} \cdot \Lambda_{j}}).
\end{equation*}
Here, observe that 

i) $M_{\wi{a_{1}-1} \cdot \Lambda_{j}}=M_{\Lambda_{j}}$ 
since $i_{l} \ne j$ for $1 \le l \le a_{1}-1$; 

ii) for each $1 \le t \le s-1$, 
$M_{\wi{a_{t+1}-1} \cdot \Lambda_{j}}=M_{\wi{a_{t}} \cdot \Lambda_{j}}$ 
since $i_{l} \ne j$ for $a_{t}+1 \le l \le a_{t+1}-1$; 

iii) $M_{\wi{a_{s}} \cdot \Lambda_{j} }=
 M_{\wi{k} \cdot \Lambda_{j} }$ since 
$i_{l} \ne j$ for $a_{s}+1 \le l \le k$. 

\noindent 
Therefore, we see that 
the left-hand side of \eqref{eq:ehrig1} is equal to 
\begin{align*}
& M_{\Lambda_{j}}-
  \sum_{t=1}^{s} 
  (M_{\wi{a_{t}-1} \cdot \Lambda_{j}}-M_{\wi{a_{t}} \cdot \Lambda_{j}}) \\
& = M_{\Lambda_{j}}-\left\{
  (M_{\wi{a_{1}-1} \cdot \Lambda_{j}}-M_{\wi{a_{1}} \cdot \Lambda_{j}})+
  \sum_{t=2}^{s-1} 
  (M_{\wi{a_{t}-1} \cdot \Lambda_{j}}-M_{\wi{a_{t}} \cdot \Lambda_{j}})+
  (M_{\wi{a_{s}-1} \cdot \Lambda_{j}}-M_{\wi{a_{s}} \cdot \Lambda_{j}})
  \right\} \\[3mm]
& = M_{\Lambda_{j}}-\left\{
  (M_{\Lambda_{j}}-M_{\wi{a_{1}} \cdot \Lambda_{j}})+
  \sum_{t=2}^{s-1} 
  (M_{\wi{a_{t-1}} \cdot \Lambda_{j}}-M_{\wi{a_{t}} \cdot \Lambda_{j}})+
  (M_{\wi{a_{s-1}} \cdot \Lambda_{j}}-M_{\wi{k} \cdot \Lambda_{j}})
  \right\} \\[3mm]
& = M_{\wi{k} \cdot \Lambda_{j}} 
  = M_{x \cdot \Lambda_{j}} 
  \quad \text{since $\wi{k}=s_{i_{1}}s_{i_{2}} \cdots s_{i_{k}}=x$},
\end{align*}
as desired. This proves the proposition.
\end{proof}

%=========================%
%     START SECTION 03    %
%=========================%
%
\section{Tensor products and Minkowski sums of MV polytopes.}
\label{sec:main}

%==============================%
%     START SUBSECTION 0301    %
%==============================%
%
\subsection{Main result.}
\label{subsec:main}

Let $\lambda_{1},\,\lambda_{2} \in X_{\ast}(T)_{+} \subset \Fh_{\BR}$ 
be dominant coweights. 
Since $\mv(\lambda) \cong \CB(\lambda)$ as crystals 
for every dominant coweight $\lambda \in X_{\ast}(T)_{+} \subset \Fh_{\BR}$, 
the tensor product 
$\mv(\lambda_{1}) \otimes \mv(\lambda_{2})$ of 
the crystals $\mv(\lambda_{1})$ and $\mv(\lambda_{2})$ 
decomposes into a disjoint union of 
connected components as follows:
\begin{equation*}
\mv(\lambda_{1}) \otimes \mv(\lambda_{2}) \cong 
 \bigoplus_{
   \lambda \in X_{\ast}(T)_{+}
   } 
\mv(\lambda)^{\oplus m_{\lambda_{1},\lambda_{2}}^{\lambda}},
\end{equation*}
where $m_{\lambda_{1},\lambda_{2}}^{\lambda} \in \BZ_{\ge 0}$ denotes 
the multiplicity of $\mv(\lambda)$ in 
$\mv(\lambda_{1}) \otimes \mv(\lambda_{2})$. 
For each dominant coweight 
$\lambda \in X_{\ast}(T)_{+} \subset \Fh_{\BR}$ 
such that $m_{\lambda_{1},\lambda_{2}}^{\lambda} \ge 1$, 
we take (and fix) an arbitrary embedding 
$\iota_{\lambda}:\mv(\lambda) \hookrightarrow 
 \mv(\lambda_{1}) \otimes \mv(\lambda_{2})$ 
of crystals that maps $\mv(\lambda)$ 
onto a connected component of 
$\mv(\lambda_{1}) \otimes \mv(\lambda_{2})$, 
which is isomorphic to $\mv(\lambda)$ as a crystal. 

The following theorem is the main result of this paper; 
in our previous paper \cite{KNS}, we proved the same assertion 
under the assumption that $P_{1} \in \mv(\lambda_{1})$ is an 
extremal MV polytope. 
%
%%%%%%%%%%%%%%%%
%%% thm:main %%%
%%%%%%%%%%%%%%%%
%
\begin{thm} \label{thm:main}
Keep the notation above.
Let $P_{1} \in \mv(\lambda_{1})$, 
$P_{2} \in \mv(\lambda_{2})$, and $P \in \mv(\lambda)$ 
be such that $\iota_{\lambda}(P)=P_{1} \otimes P_{2}$ 
for some dominant coweight $\lambda \in 
X_{\ast}(T)_{+} \subset \Fh_{\BR}$. 
Then, $P$ is contained in the Minkowski sum $P_{1}+P_{2}$ of 
the MV polytopes $P_{1} \in \mv(\lambda_{1})$ and 
$P_{2} \in \mv(\lambda_{2})$. Namely, we have the inclusion 
\begin{equation*}
P \subset P_{1} + P_{2}.
\end{equation*}
\end{thm}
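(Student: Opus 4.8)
The plan is to trade the polytope inclusion $P\subset P_{1}+P_{2}$ for a system of numerical inequalities between Berenstein--Zelevinsky data and then to verify those by combining Ehrig's description of MV polytopes with the tensor product rule for lowering Kashiwara operators. By Lemma~\ref{lem:Minkowski} the Minkowski sum $P_{1}+P_{2}$ is the pseudo-Weyl polytope whose GGMS datum is the componentwise sum $\mu_{\bullet}^{(1)}+\mu_{\bullet}^{(2)}$ of the GGMS data of $P_{1}$ and $P_{2}$; since each coordinate $M_{\gamma}=\pair{\mu_{w}}{w\cdot\Lambda_{j}}$ of a BZ datum is a linear functional of the vertex $\mu_{w}$, the BZ datum of $P_{1}+P_{2}$ is $M_{\bullet}^{(1)}+M_{\bullet}^{(2)}$. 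Hence, by Lemma~\ref{lem:bz} and \eqref{eq:polybz}, the theorem is equivalent to the inequality
\[
M_{\gamma}\ \ge\ M_{\gamma}^{(1)}+M_{\gamma}^{(2)}\qquad\text{for all }\gamma\in\Gamma ,
\]
$M_{\bullet}$ being the BZ datum of $P$; this is (a form of) Proposition~\ref{prop:key1}, and the remaining task is to establish it.

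To that end I would fix $\gamma=x\cdot\Lambda_{j}$ with $x\in W$, $j\in I$, and choose a reduced expression $x=s_{i_{1}}\cdots s_{i_{k}}$. By Proposition~\ref{prop:ehrig} and the $W$-invariance of $\pair{\cdot}{\cdot}$ we get $M_{x\cdot\Lambda_{j}}=\pair{\wt(f_{i_{k}}^{\max}\cdots f_{i_{1}}^{\max}P)}{\Lambda_{j}}$, and likewise for $P_{1}$ and $P_{2}$. Because $P=\iota_{\lambda}^{-1}(P_{1}\otimes P_{2})$ and $\iota_{\lambda}$ maps $\mv(\lambda)$ isomorphically onto a connected component of $\mv(\lambda_{1})\otimes\mv(\lambda_{2})$ — hence is strict and thus preserves $\wt$, $\vp_{i}$, and commutes with each $f_{i}$ and each $f_{i}^{\max}$ — one has $\wt(f_{i_{k}}^{\max}\cdots f_{i_{1}}^{\max}P)=\wt(f_{i_{k}}^{\max}\cdots f_{i_{1}}^{\max}(P_{1}\otimes P_{2}))$. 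The tensor product rule now gives $f_{i_{k}}^{\max}\cdots f_{i_{1}}^{\max}(P_{1}\otimes P_{2})=Q_{1}\otimes Q_{2}$ with $Q_{2}=f_{i_{k}}^{\max}\cdots f_{i_{1}}^{\max}P_{2}$ (the full maximal operators) and $Q_{1}=f_{i_{k}}^{a_{k}}\cdots f_{i_{1}}^{a_{1}}P_{1}$, where at each stage $l$ the exponent $a_{l}$ does not exceed $\vp_{i_{l}}$ of the current first factor (with the opposite convention for $\otimes$ the roles of $P_{1}$ and $P_{2}$ merely trade, which is harmless since $P_1+P_2=P_2+P_1$). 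Taking weights, pairing with $\Lambda_{j}$, and using Proposition~\ref{prop:ehrig} for $P_{2}$ to identify $\pair{\wt(Q_{2})}{\Lambda_{j}}=M_{x\cdot\Lambda_{j}}^{(2)}$, the desired inequality $M_{x\cdot\Lambda_{j}}\ge M_{x\cdot\Lambda_{j}}^{(1)}+M_{x\cdot\Lambda_{j}}^{(2)}$ collapses to
\[
\pair{\wt(f_{i_{k}}^{a_{k}}\cdots f_{i_{1}}^{a_{1}}P_{1})}{\Lambda_{j}}\ \ge\ \pair{\wt(f_{i_{k}}^{\max}\cdots f_{i_{1}}^{\max}P_{1})}{\Lambda_{j}}\qquad(j\in I),
\]
equivalently $\wt(f_{i_{k}}^{\max}\cdots f_{i_{1}}^{\max}P_{1})\le\wt(f_{i_{k}}^{a_{k}}\cdots f_{i_{1}}^{a_{1}}P_{1})$ in the order of \S\ref{subsec:notation}: applying the maximal lowering operators at each step decreases the weight by the most.

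The hard part is exactly this last monotonicity assertion, which is Proposition~\ref{prop:key1} in substance; it is false for arbitrary abstract crystals, so it must genuinely use the MV-polytope structure of $\mv(\lambda_{1})$. Writing $p_{l}:=\vp_{i_{l}}(f_{i_{l-1}}^{\max}\cdots f_{i_{1}}^{\max}P_{1})$, it amounts to $\sum_{l:\,i_{l}=j}a_{l}\le\sum_{l:\,i_{l}=j}p_{l}$ for each $j$; I would recast this as the claimed inequality between the GGMS (equivalently BZ) data and prove it by invoking Kamnitzer's theorem (Remark~\ref{rem:LBZ=BFG}) that the LBZ crystal structure on $\mv(\lambda_{1})$ coincides with the Braverman--Finkelberg--Gaitsgory one arising from the affine Grassmannian, which supplies the extra geometric control over the Kashiwara data, or else argue directly in $\Gr$ from the definition of MV cycles, as in the Appendix. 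Granting it, the theorem follows by reassembly: for every $\gamma\in\Gamma$ one has $M_{\gamma}=M_{\gamma}^{(2)}+\pair{\wt(Q_{1})}{\Lambda_{j}}\ge M_{\gamma}^{(2)}+M_{\gamma}^{(1)}$, whence $P\subset P_{1}+P_{2}$.
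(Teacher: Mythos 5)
Your proposal follows the paper's proof of Theorem~\ref{thm:main} essentially step for step: Lemma~\ref{lem:Minkowski} plus linearity of BZ data and Lemma~\ref{lem:bz} reduce the inclusion $P\subset P_{1}+P_{2}$ to the coordinatewise inequality $M_{\gamma}\ge M_{\gamma}^{(1)}+M_{\gamma}^{(2)}$; Proposition~\ref{prop:ehrig} rewrites each side via iterated $f_{i}^{\max}$; the tensor product rule isolates the remaining content as the weight monotonicity
\[
\wt\bigl(f_{i_{k}}^{c_{k}}\cdots f_{i_{1}}^{c_{1}}P_{1}\bigr)\ \ge\ \wt\bigl(f_{i_{k}}^{\max}\cdots f_{i_{1}}^{\max}P_{1}\bigr),
\]
which is exactly the paper's Proposition~\ref{prop:key} (you label it ``Proposition~\ref{prop:key1} in substance''; strictly speaking \eqref{eq:key1} is a sub-lemma used in the proof of \eqref{eq:key}, not \eqref{eq:key} itself).

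The one real gap is that you assert this monotonicity without proving it, deferring to ``invoking Kamnitzer's theorem'' or ``arguing directly in $\Gr$.'' The paper does not get it for free from Remark~\ref{rem:LBZ=BFG}; rather it proves Proposition~\ref{prop:key1} first, namely $z^{-1}\cdot\mu_{z}\ge x^{-1}\cdot\mu_{x}$ whenever $z\le x$ in Bruhat order, by reducing (via the chain property and the strong exchange property) to a single covering relation $x=s_{i_{1}}z$, applying Proposition~\ref{prop:ehrig} to $P'=f_{i_{1}}^{\max}P$, and using Fact~\ref{fact:fjP} (the inclusion $f_{j}P\supset P$, imported from Baumann--Gaussent through the LBZ$=$BFG identification) to compare the two GGMS data; Proposition~\ref{prop:key} then follows by induction on $\ell(x)$, splitting into the cases $c_{1}=0$ (use Proposition~\ref{prop:key1} plus the induction hypothesis) and $c_{1}\neq 0$ (replace $P$ by $f_{i_{1}}^{c_{1}}P$ and use $f_{i_{1}}^{\max}$-invariance). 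Your intuition that the statement ``must genuinely use the MV-polytope structure'' and is ``false for arbitrary abstract crystals'' is exactly right, but without the inclusion $f_{j}P\supset P$ and the Bruhat-order induction the argument is incomplete. With that lemma supplied, your reassembly into $M_{\gamma}\ge M_{\gamma}^{(1)}+M_{\gamma}^{(2)}$ is correct and matches the paper.
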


%==============================%
%     START SUBSECTION 0302    %
%==============================%
%
\subsection{A key inequality and its application.}
\label{subsec:keys}
Let $\lambda \in X_{\ast}(T)_{+} \subset \Fh_{\BR}$ 
be a fixed (but arbitrary) dominant coweight, and 
$P=P(\mu_{\bullet}) \in \mv(\lambda)$ an (arbitrary) MV polytope 
with GGMS datum $\mu_{\bullet}=(\mu_{w})_{w \in W}$.

In view of the agreement of the LBZ and BFG crystal structures
on the set of MV polytopes (see Remark~\ref{rem:LBZ=BFG}), 
we deduce the following fact from \cite[Proposition~4.2]{BG}; 
note the convention in \cite{BG} that the roots in $B$ are 
the positive ones, which is opposite to ours. 

%%%%%%%%%%%%%%%%
%%% fact:fjP %%%
%%%%%%%%%%%%%%%%
%
\begin{fact} \label{fact:fjP}
Keep the setting above. Let $j \in I$, and 
assume that $f_{j}P \ne \bzero$. Then we have 
$f_{j}P \supset P$. 
\end{fact}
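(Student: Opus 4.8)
The plan is to work with Berenstein-Zelevinsky data and reduce, via Lemma~\ref{lem:bz}, to a family of numerical inequalities. Write $M_{\bullet}=(M_{\gamma})_{\gamma \in \Gamma}$ for the BZ datum of $P$ and $M_{\bullet}'=(M_{\gamma}')_{\gamma \in \Gamma}$ for that of $f_{j}P$. By Lemma~\ref{lem:bz}, the claim $f_{j}P \supset P$ is equivalent to $M_{\gamma}' \le M_{\gamma}$ for all $\gamma \in \Gamma$, and I would prove this by splitting $\Gamma$ according to the sign of $\pair{h_{j}}{\gamma}$. For $\gamma=\Lambda_{k}$ ($k \in I$) this is immediate: since $\wt(f_{j}P)=\wt(P)-h_{j}$ and $\pair{\wt P}{\Lambda_{k}}=M_{\Lambda_{k}}$ (as in the proof of Proposition~\ref{prop:ehrig}), we get $M_{\Lambda_{j}}'=M_{\Lambda_{j}}-1<M_{\Lambda_{j}}$ and $M_{\Lambda_{k}}'=M_{\Lambda_{k}}$ for $k \ne j$.

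Next I would treat the bulk of $\Gamma$, namely those $\gamma$ with $\pair{h_{j}}{\gamma} \le 0$. For such $\gamma$ one can write $\gamma=w \cdot \Lambda_{k}$ with $s_{j}w<w$ (an elementary check: if the minimal representative of $wW_{I \setminus \{k\}}$ does not already have this property, modify it by the reflection in the positive root $w^{-1}\alpha_{j}$, which is orthogonal to $\Lambda_{k}$); extending a reduced word of such a $w$ to a reduced word $\bi=(i_{1},\dots,i_{m}) \in R(w_{0})$ with $i_{1}=j$, we have $\gamma=\wi{l} \cdot \Lambda_{k}$ with $l=\ell(w) \ge 1$. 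The key observation is that, because $f_{j}P \ne \bzero$, one has $f_{j}^{\max}(f_{j}P)=f_{j}^{\max}P$; hence the $\bi$-Kashiwara datum $(p_{1}',\dots,p_{m}')$ of $f_{j}P$ agrees with the $\bi$-Kashiwara datum $(p_{1},\dots,p_{m})$ of $P$ except that $p_{1}'=p_{1}-1$. Feeding this, together with $M_{\Lambda_{k}}'=M_{\Lambda_{k}}-\delta_{jk}$, into Kamnitzer's identity \eqref{eq:kam-kas} and the telescoping computation carried out in the proof of Proposition~\ref{prop:ehrig} (which expresses $M_{\wi{l} \cdot \Lambda_{k}}$ as $M_{\Lambda_{k}}$ minus a partial sum of the $p_{a}$), I obtain $M_{\gamma}'=M_{\gamma}$ for every such $\gamma$; thus $M_{\gamma}'=M_{\gamma}$ whenever $\pair{h_{j}}{\gamma} \le 0$.

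It remains to handle the chamber weights $\gamma$ with $\pair{h_{j}}{\gamma}>0$ and $\gamma \ne \Lambda_{j}$. For these, $s_{j}\gamma$ satisfies $\pair{h_{j}}{s_{j}\gamma}<0$, so $M_{s_{j}\gamma}'=M_{s_{j}\gamma}$ by the previous step. I would then invoke the tropical Pl\"ucker relations satisfied by the BZ datum of any MV polytope (the $3$-term relations, and the longer relations of \cite[\S3]{BeZe} in the non-simply-laced case): these express $M_{\gamma}$ (resp.\ $M_{\gamma}'$) in terms of $M_{s_{j}\gamma}$ (resp.\ $M_{s_{j}\gamma}'$) and finitely many neighbouring chamber weights, each of which either satisfies $\pair{h_{j}}{\cdot} \le 0$ or equals $\Lambda_{j}$, and so is already pinned down by the first two steps; comparing the two expressions gives $M_{\gamma}'=M_{\gamma}$. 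Combining the three cases and applying Lemma~\ref{lem:bz} yields $f_{j}P \supset P$.

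The main obstacle is this last step: identifying which Pl\"ucker relation to apply at a given $\gamma$ and carrying out the non-simply-laced cases of \cite{BeZe} uniformly is the only genuinely delicate bookkeeping. It can be bypassed altogether by the route the paper in fact takes: by Remark~\ref{rem:LBZ=BFG}, the operator $f_{j}$ on $\mv(\lambda)$ coincides with the BFG crystal operator on MV cycles, and the inclusion then follows directly from \cite[Proposition~4.2]{BG} (after passing between the two opposite conventions for the roots in $B$).
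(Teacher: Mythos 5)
Your Step~1 and Step~2 are correct; in fact Step~2 can be shortened, since for $\gamma=w\cdot\Lambda_{k}$ with $s_{j}w<w$ one has $M'_{\gamma}=\pair{\mu'_{w}}{w\cdot\Lambda_{k}}=\pair{\mu_{w}}{w\cdot\Lambda_{k}}=M_{\gamma}$ immediately from the definition of $f_{j}\mu_{\bullet}$, with no need to invoke Kashiwara data or the telescoping from Proposition~\ref{prop:ehrig}. Your final sentence also correctly identifies what the paper actually does: it simply cites \cite[Proposition~4.2]{BG} via the LBZ $=$ BFG agreement of Remark~\ref{rem:LBZ=BFG}, so your ``bypass'' \emph{is} the paper's proof.

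The problem is Step~3. Your claim that ``comparing the two expressions gives $M'_{\gamma}=M_{\gamma}$'' for $\gamma$ with $\pair{h_{j}}{\gamma}>0$, $\gamma\ne\Lambda_{j}$, is false in general. Take type $A_{2}$, $j=1$, $\gamma=s_{2}\cdot\Lambda_{2}$, $\bi=(1,2,1)$, $\bj=(2,1,2)$. Writing $(n_{1},n_{2},n_{3})$ for the $\bi$-Lusztig datum of $P$ (the edge lengths $\Ni{l}$), the $3$-move relation gives $\Nj{1}=n_{2}+n_{3}-\min(n_{1},n_{3})$, and since $\mu_{s_{2}}=\mu_{e}+\Nj{1}h_{2}$ one computes
\begin{equation*}
M_{s_{2}\cdot\Lambda_{2}}=\pair{\mu_{e}}{s_{2}\cdot\Lambda_{2}}-\Nj{1},
\qquad
M'_{s_{2}\cdot\Lambda_{2}}=\pair{\mu_{e}}{s_{2}\cdot\Lambda_{2}}-1-\Nj{1}{}',
\end{equation*}
where $\Nj{1}{}'=n_{2}+n_{3}-\min(n_{1}+1,n_{3})$ because $f_{1}$ turns the $\bi$-Lusztig datum into $(n_{1}+1,n_{2},n_{3})$. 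When $n_{3}\le n_{1}$ this yields $\Nj{1}{}'=\Nj{1}$ and hence $M'_{s_{2}\cdot\Lambda_{2}}=M_{s_{2}\cdot\Lambda_{2}}-1\ne M_{s_{2}\cdot\Lambda_{2}}$ (and $f_{1}P\ne\bzero$ can certainly be arranged by taking $\lambda$ large). So the operator $f_{j}$ does \emph{not} merely shave the polytope by one unit at the single hyperplane $\{\pair{\cdot}{\Lambda_{j}}=M_{\Lambda_{j}}\}$: other facet data with $\pair{h_{j}}{\gamma}>0$ can drop as well. The desired inequality $M'_{\gamma}\le M_{\gamma}$ does hold, but extracting it from the tropical Pl\"ucker relations requires a genuine case analysis on the $\min$'s (and, as you note, the longer non-simply-laced relations), not a mechanical ``comparison''; as written Step~3 asserts a stronger statement that is simply false, so the BZ-data route is broken without substantial repair. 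The one-line route via \cite[Proposition~4.2]{BG} is therefore not just a convenience but the only sound argument in your write-up.
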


By combining this fact with Proposition~\ref{prop:ehrig}, 
we can prove the following inequality, which plays a key role 
in the proof of Theorem~\ref{thm:main}; in the Appendix, 
we will give a purely geometric proof of this inequality. 
%
%%%%%%%%%%%%%%%%%
%%% prop:key1 %%%
%%%%%%%%%%%%%%%%%
%
\begin{prop} \label{prop:key1}
With the notation as above, 
let $x,\,z \in W$ be such that $z \le x$ 
in the Bruhat order on $W$. Then, we have 
%
%%%%%%%%%%%%%%%
%%% eq:key1 %%%
%%%%%%%%%%%%%%%
%
\begin{equation} \label{eq:key1}
z^{-1} \cdot \mu_{z} \ge x^{-1} \cdot \mu_{x}.
\end{equation}
\end{prop}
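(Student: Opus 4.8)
The plan is to reduce \eqref{eq:key1} to the case where $x$ and $z$ differ by a single simple reflection on the right, and then invoke the characterization of $\mu_x$ via Kashiwara data (Proposition~\ref{prop:ehrig}) together with Fact~\ref{fact:fjP}. First I would observe that, by the subword property of the Bruhat order, whenever $z \le x$ there is a chain $z = y_0 < y_1 < \cdots < y_r = x$ in which each step is a \emph{covering} relation; moreover one can arrange these covers so that successive elements differ by right multiplication by a simple reflection, i.e. $y_{t} = y_{t-1}s_{i_t}$ with $\ell(y_t) = \ell(y_{t-1})+1$. Since $Q^\vee_+$ is closed under addition, it suffices to prove $y_{t-1}^{-1}\cdot\mu_{y_{t-1}} \ge y_t^{-1}\cdot\mu_{y_t}$ for each such cover; transitivity of $\ge$ then chains these together to give $z^{-1}\cdot\mu_z \ge x^{-1}\cdot\mu_x$. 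Actually, a subtlety: after multiplying on the right by $s_{i}$ the conjugating element on the left of $\mu$ also changes, so I would be careful to phrase the single-step claim as an inequality of the form $w^{-1}\cdot\mu_w - (ws_i)^{-1}\cdot\mu_{ws_i} \in Q^\vee_+$ for $\ell(ws_i) > \ell(w)$, and verify that chaining these is legitimate.

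For the single-step inequality, I would choose a reduced expression of the larger element ending in $s_i$: write $x = s_{i_1}\cdots s_{i_{k-1}}s_{i_k}$ reduced with $s_{i_k} = s_i$, so that $w := s_{i_1}\cdots s_{i_{k-1}}$ is the smaller element and $x = w s_i$. Apply Proposition~\ref{prop:ehrig} to both $w$ and $x$ using this compatible pair of reduced words: $\mu_w = w\cdot\wt(f^{\max}_{i_{k-1}}\cdots f^{\max}_{i_1}P)$ and $\mu_x = x\cdot\wt(f^{\max}_{i_k}f^{\max}_{i_{k-1}}\cdots f^{\max}_{i_1}P)$. Write $Q := f^{\max}_{i_{k-1}}\cdots f^{\max}_{i_1}P$, so $\mu_w = w\cdot\wt Q$ and $\mu_x = ws_i\cdot\wt(f^{\max}_i Q)$. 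Now $\wt(f^{\max}_i Q) = \wt Q - \vp_i(Q)\,h_i$, so
\begin{equation*}
x^{-1}\cdot\mu_x = s_i\cdot\wt(f^{\max}_i Q) = s_i\cdot\wt Q - \vp_i(Q)\,s_i\cdot h_i = s_i\cdot\wt Q + \vp_i(Q)\,h_i,
\end{equation*}
using $s_i\cdot h_i = -h_i$. On the other hand $w^{-1}\cdot\mu_w = \wt Q$. Hence
\begin{equation*}
w^{-1}\cdot\mu_w - x^{-1}\cdot\mu_x = \wt Q - s_i\cdot\wt Q - \vp_i(Q)\,h_i = \pair{h_i}{?}\cdots
\end{equation*}
more precisely $\wt Q - s_i\cdot\wt Q = \pair{\wt Q}{\alpha_i}\,h_i$ (here $\pair{\cdot}{\cdot}$ is the canonical pairing between $\Fh_\BR$ and $\Fh^\ast$), so the difference equals $\bigl(\pair{\wt Q}{\alpha_i} - \vp_i(Q)\bigr)h_i$. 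Thus the single-step claim is exactly the inequality $\pair{\wt Q}{\alpha_i} \ge \vp_i(Q)$, i.e. $\ve_i(Q) \ge 0$ — trivially true — OR, depending on sign conventions for $\Fg^\vee$, it is the inequality $\vp_i(Q) - \pair{\wt Q}{\alpha_i} = \ve_i(Q) \ge 0$; either way it reduces to nonnegativity of a string length, which is immediate.

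The part I expect to require the most care is precisely this sign bookkeeping: the dual Lie algebra $\Fg^\vee$ has Cartan subalgebra $\Fh^\ast$ rather than $\Fh$, so ``$\wt$'' takes values in $\Fh_\BR$ while the relevant pairing $\vp_i(Q) = \ve_i(Q) + \pair{h_i}{\wt Q}$ (or its dual analogue) must be matched correctly against $Q^\vee_+ = \sum_j \BZ_{\ge 0}h_j$, and Fact~\ref{fact:fjP} ($f_jP \supset P$ when $f_jP \ne \bzero$) is what ultimately guarantees that the $f^{\max}$'s appearing in Proposition~\ref{prop:ehrig} are applied in a way consistent with the monotonicity we want. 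I would double-check that the chain of covers can indeed be taken with each step of the form ``right multiplication by a simple reflection increasing length'' — this is standard (it follows from the exchange/subword characterization of Bruhat order, realizing $z$ as a subword of a fixed reduced word for $x$ and inserting the missing letters one at a time) — and that applying Proposition~\ref{prop:ehrig} with reduced words that are compatible along the whole chain is legitimate, which it is since Proposition~\ref{prop:ehrig} holds for \emph{every} reduced expression. Once these conventions are pinned down, the inequality \eqref{eq:key1} follows by summing the one-step estimates along the chain.
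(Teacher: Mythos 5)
There is a genuine gap. Your reduction rests on the claim that whenever $z \le x$ in the Bruhat order, one can find a chain $z = y_0 < y_1 < \cdots < y_r = x$ in which each step is of the form $y_t = y_{t-1}s_{i_t}$ with $\ell(y_t) = \ell(y_{t-1})+1$. This is precisely the statement that $z \le x$ in the \emph{right weak order}, and the right weak order is strictly finer than the Bruhat order. A concrete counterexample already occurs in type $A_2$: take $z = s_1$ and $x = s_2 s_1$. Then $z \le x$ in the Bruhat order (the letter $s_1$ is a subword of $(s_2, s_1)$), but $z^{-1}x = s_1 s_2 s_1$ is a non-simple reflection, and the only elements weakly below $s_2 s_1$ on the right are $e, s_2, s_2s_1$. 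So there is no chain of right-weak-order covers from $s_1$ up to $s_2 s_1$; equivalently, no reduced word for $s_2 s_1$ has $s_1$ as a \emph{prefix}. Your ``insert the missing letters one at a time'' heuristic silently assumes the missing letter is always at the end; if the subword characterization forces an insertion in the middle or at the front, the corresponding step is right multiplication by a reflection that is generally not simple, and your single-step computation does not apply.

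What the paper does differently is precisely engineered to deal with this. By the strong exchange property, when $\ell(x) = \ell(z)+1$ one can delete a letter from a fixed reduced word $x = s_{i_1}\cdots s_{i_k}$ at some position $l$ to get a reduced word for $z$; the position $l$ need not be $k$. The easy case $l = k$ is the one your computation handles (and it then reduces to $\vp_{i_k}(\cdot) \ge 0$, as you found, once you fix the algebra). The hard case is $l < k$: the paper first applies $f_{i_{l-1}}^{\max}\cdots f_{i_1}^{\max}$ to pass to a new MV polytope, reducing to the case $l=1$, and then in the $l=1$ case uses Fact~\ref{fact:fjP} (namely $f_j P \supset P$) together with the defining inequalities \eqref{eq:poly} of pseudo-Weyl polytopes. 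This is the step your argument is missing; without it you have only proved the inequality along the right weak order, which is strictly weaker than the Bruhat order.

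A secondary point: in your single-step display you wrote $x^{-1}\cdot\mu_x = s_i\cdot\wt(f_i^{\max}Q)$, but Proposition~\ref{prop:ehrig} gives $\mu_x = x\cdot\wt(f_i^{\max}Q)$, hence $x^{-1}\cdot\mu_x = \wt(f_i^{\max}Q)$ with no extra $s_i$. With that corrected, the single-step difference is simply $w^{-1}\cdot\mu_w - x^{-1}\cdot\mu_x = \wt Q - \wt(f_i^{\max}Q) = \vp_i(Q)\,h_i \in Q^\vee_+$, so no sign-juggling with $\ve_i$ is needed. But this cleaner identity only reinforces the point above: it applies only to last-letter deletions, and the essential difficulty of the proposition is to get past those.
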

%
%%%%%%%%%%%%%%%%
%%% rem:key1 %%%
%%%%%%%%%%%%%%%%
%
\begin{rem} \label{rem:key1}
Keep the notation and assumptions in Proposition~\ref{prop:key1}. 
It follows from Remark~\ref{rem:GGMS-Q} and 
the definition of the order $\ge$ on $\Fh_{\BR}$ that
\begin{align*}
z^{-1} \cdot \mu_{z} \ge x^{-1} \cdot \mu_{x} 
 & \quad \Leftrightarrow \quad
\pair{z^{-1} \cdot \mu_{z}}{\Lambda_{j}} \ge 
\pair{x^{-1} \cdot \mu_{x}}{\Lambda_{j}}
\quad \text{for all $j \in I$}. \\
& \quad \Leftrightarrow \quad
M_{z \cdot \Lambda_{j}} \ge M_{x \cdot \Lambda_{j}}
\quad \text{for all $j \in I$}.
\end{align*}
\end{rem}
%
%%%%%%%%%%%%%%%%%
%%% rem:key1a %%%
%%%%%%%%%%%%%%%%%
%
\begin{rem} \label{rem:key1a}
It is well-known (see, for example, \cite[Theorem~2.6.1]{BB}) that 
for $x,\,z \in W$, $z \le x$ in the Bruhat order on $W$ if and only if 
$zW_{\Lambda_{j}} \le xW_{\Lambda_{j}}$ in the Bruhat order on the cosets
$W/W_{\Lambda_{j}}$ modulo the stabilizer $W_{\Lambda_{j}}$ of $\Lambda_{j}$ in $W$ 
for all $j \in I$. 
Moreover, if we are in the case of type $A$, then it is well-known 
(see, for example, \cite[\S3.2]{FZ}) that for each (fixed) $j \in I$, 
$z \cdot \Lambda_{j}- x \cdot \Lambda_{j} \in 
 Q_{+}:=\sum_{j \in I} \BZ_{\ge 0} \alpha_{j}$ if and only if 
$zW_{\Lambda_{j}} \le xW_{\Lambda_{j}}$ 
in the Bruhat order on $W/W_{\Lambda_{j}}$.
Thus, the inequality \eqref{eq:key1} seems closely related to 
\cite[Proposition~2.7]{Kam0} in the case of type $A$.
In fact, in the Appendix, we prove a (slightly) strengthened form 
(Proposition~\ref{prop:apdx}), which can be regarded as a generalization 
of \cite[Proposition~2.7]{Kam0}.
\end{rem}

\begin{proof}[Proof of Proposition~\ref{prop:key1}]
For $z,\,x \in W$ such that $z \le x$, 
there exists a sequence $z=x_{0} < x_{1} < \cdots < x_{s}=x$ 
of elements in $W$ such that $\ell(x_{t})=\ell(x_{t-1})+1$ 
for $1 \le t \le s$ by the chain property 
(see, for example, \cite[Theorem~2.2.6]{BB}). 
Hence we may assume that $\ell(x)=\ell(z)+1$. 

Let $x=s_{i_{1}}s_{i_{2}} \cdots s_{i_{k}}$ 
be a reduced expression of $x$. Because $z < x$ and $\ell(x)=\ell(z)+1$, 
it follows from the strong exchange property (see, for example, 
\cite[Theorem~1.4.3]{BB}) that $z$ has a reduced expression of the form: 
$z=s_{i_{1}} \cdots s_{i_{l-1}}s_{i_{l+1}} \cdots s_{i_{k}}$ 
for some $1 \le l \le k$. 

\paragraph{Case 1.}
Suppose that $l=1$; in this case, we have 
$x=s_{i_{1}}z > z=s_{i_{2}} \cdots s_{i_{k}}$. 
Then, we see from Proposition~\ref{prop:ehrig} that
\begin{equation*}
x^{-1} \cdot \mu_{x}=\wt (f_{i_{k}}^{\max} \cdots f_{i_{2}}^{\max}f_{i_{1}}^{\max}P), \qquad
z^{-1} \cdot \mu_{z}=\wt (f_{i_{k}}^{\max} \cdots f_{i_{2}}^{\max}P). 
\end{equation*}
Now we set $P'=P(\mu'_{\bullet}):=f_{i_{1}}^{\max}P \in \mv(\lambda)$, 
where $\mu'_{\bullet}=(\mu'_{w})_{w \in W}$ denotes the GGMS datum of $P'$. 
Then, again from Proposition~\ref{prop:ehrig}, we see that
%
%%%%%%%%%%%%%%%%%
%%% eq:key1-a %%%
%%%%%%%%%%%%%%%%%
%
\begin{equation} \label{eq:key1-a}
x^{-1} \cdot \mu_{x}=
\wt (f_{i_{k}}^{\max} \cdots f_{i_{2}}^{\max}\underbrace{f_{i_{1}}^{\max}P}_{=P'})=
z^{-1} \cdot \mu'_{z}
\end{equation}
since $z=s_{i_{2}} \cdots s_{i_{k}}$.
Because $P'=f_{i_{1}}^{\max}P \supset P$ by Fact~\ref{fact:fjP}, 
it follows immediately from \eqref{eq:poly} that 
$z^{-1} \cdot \mu_{z} - z^{-1} \cdot \mu'_{z} \in 
\sum_{j \in I} \BR_{\ge 0} h_{j}$. Combining this and \eqref{eq:key1-a}, 
we obtain
\begin{equation*}
z^{-1} \cdot \mu_{z} - x^{-1} \cdot \mu_{x} \in \sum_{j \in I} \BR_{\ge 0} h_{j}. 
\end{equation*}
Since $z^{-1} \cdot \mu_{z} - x^{-1} \cdot \mu_{x} \in Q^{\vee}$ 
by Remark~\ref{rem:GGMS-Q}, 
we conclude that $z^{-1} \cdot \mu_{z} - x^{-1} \cdot \mu_{x} \in Q^{\vee}_{+}$, 
which implies that $z^{-1} \cdot \mu_{z} \ge x^{-1} \cdot \mu_{x}$. 

\paragraph{Case 2.}
Suppose that $l \ge 2$. 
We set $P''=P(\mu''_{\bullet}):=
f_{i_{l-1}}^{\max} \cdots f_{i_{2}}^{\max}f_{i_{1}}^{\max}P \in \mv(\lambda)$, 
where $\mu''_{\bullet}=(\mu''_{w})_{w \in W}$ denotes the GGMS datum of $P''$, 
and set $x'':=s_{i_{l}}s_{i_{l+1}} \cdots s_{i_{k}}$, 
$z'':=s_{i_{l+1}} \cdots s_{i_{k}}$.
Then we see from Proposition~\ref{prop:ehrig} that
\begin{align*}
x^{-1} \cdot \mu_{x} & =
  \wt (f_{i_{k}}^{\max} \cdots 
       f_{i_{l+1}}^{\max}f_{i_{l}}^{\max}
       \underbrace{f_{i_{l-1}}^{\max} \cdots f_{i_{2}}^{\max}f_{i_{1}}^{\max}P}_{=P''}) \\[1.5mm]
& = \wt (f_{i_{k}}^{\max} \cdots f_{i_{l+1}}^{\max}f_{i_{l}}^{\max}P'')
  = (x'')^{-1} \cdot \mu_{x''}'',
\end{align*}
\begin{align*}
z^{-1} \cdot \mu_{z} & =
  \wt (f_{i_{k}}^{\max} \cdots f_{i_{l+1}}^{\max}
       \underbrace{f_{i_{l-1}}^{\max} \cdots f_{i_{2}}^{\max}f_{i_{1}}^{\max}P}_{=P''}) \\[1.5mm]
& = \wt (f_{i_{k}}^{\max} \cdots f_{i_{l+1}}^{\max}P'') = (z'')^{-1} \cdot \mu_{z''}''.
\end{align*}
Consequently, 
by applying the result in Case 1 (with $x=x''$ and $z=z''$), 
we obtain $(z'')^{-1} \cdot \mu_{z''}'' \ge (x'')^{-1} \cdot \mu_{x''}''$, and hence
\begin{equation*}
z^{-1} \cdot \mu_{z}=(z'')^{-1} \cdot \mu_{z''}'' \ge 
(x'')^{-1} \cdot \mu_{x''}''=x^{-1} \cdot \mu_{x},
\end{equation*}
as desired. This proves the proposition.
\end{proof}

%%%%%%%%%%%%%%%%
%%% prop:key %%%
%%%%%%%%%%%%%%%%
%
\begin{prop} \label{prop:key}
Keep the setting above. 
Let $x \in W$, 
and let $x=s_{i_{1}}s_{i_{2}} \cdots s_{i_{k}}$ be 
an arbitrary reduced expression of $x$. 
Suppose that $f_{i_{k}}^{c_{k}} \cdots 
f_{i_{2}}^{c_{2}}f_{i_{1}}^{c_{1}}P \ne \bzero$ for some 
$c_{1},\,c_{2},\,\dots,\,c_{k} \in \BZ_{\ge 0}$. Then, 
%
%%%%%%%%%%%%%%
%%% eq:key %%%
%%%%%%%%%%%%%%
%
\begin{equation} \label{eq:key}
\wt(f_{i_{k}}^{c_{k}} \cdots f_{i_{2}}^{c_{2}}f_{i_{1}}^{c_{1}}P) \ge 
\wt(f_{i_{k}}^{\max} \cdots f_{i_{2}}^{\max}f_{i_{1}}^{\max}P). 
\end{equation}
\end{prop}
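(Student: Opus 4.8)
The plan is to prove Proposition~\ref{prop:key} by induction on $k=\ell(x)$, the key idea being to strip off the \emph{leftmost} operator $f_{i_{1}}$ (not the rightmost one) and to feed the induction hypothesis the MV polytope $f_{i_{1}}^{c_{1}}P$ together with the word $s_{i_{2}} \cdots s_{i_{k}}$, which is a reduced word for $s_{i_{1}}x$ and hence has length $k-1$. The base case $k=0$ is vacuous, and for $k=1$ the claim is immediate: $\wt(f_{i_{1}}^{c_{1}}P)=\wt(P)-c_{1}h_{i_{1}} \ge \wt(P)-\vp_{i_{1}}(P)h_{i_{1}}=\wt(f_{i_{1}}^{\max}P)$ since $0 \le c_{1} \le \vp_{i_{1}}(P)$.

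For the inductive step I would set $\widetilde{P}:=f_{i_{1}}^{c_{1}}P$ and $\overline{P}:=f_{i_{1}}^{\max}P$; both lie in $\mv(\lambda)$ (note $\widetilde{P} \ne \bzero$, since $f_{i_{k}}^{c_{k}} \cdots f_{i_{1}}^{c_{1}}P \ne \bzero$ factors through it), with GGMS data $\widetilde{\mu}_{\bullet}$ and $\overline{\mu}_{\bullet}$, and I put $z:=s_{i_{2}} \cdots s_{i_{k}}$. The argument then breaks into three steps. First, since $f_{i_{k}}^{c_{k}} \cdots f_{i_{2}}^{c_{2}}\widetilde{P}=f_{i_{k}}^{c_{k}} \cdots f_{i_{1}}^{c_{1}}P \ne \bzero$, the induction hypothesis applied to $\widetilde{P}$, the reduced word $z=s_{i_{2}} \cdots s_{i_{k}}$, and the exponents $c_{2},\dots,c_{k}$ yields
\begin{equation*}
\wt(f_{i_{k}}^{c_{k}} \cdots f_{i_{1}}^{c_{1}}P)
=\wt(f_{i_{k}}^{c_{k}} \cdots f_{i_{2}}^{c_{2}}\widetilde{P})
\ge \wt(f_{i_{k}}^{\max} \cdots f_{i_{2}}^{\max}\widetilde{P}).
\end{equation*}
Second, Proposition~\ref{prop:ehrig} applied to $\widetilde{P}$ along $z$ gives $\wt(f_{i_{k}}^{\max} \cdots f_{i_{2}}^{\max}\widetilde{P})=z^{-1} \cdot \widetilde{\mu}_{z}$, and, applied to $\overline{P}$ along $z$ together with the identity $f_{i_{k}}^{\max} \cdots f_{i_{1}}^{\max}P=f_{i_{k}}^{\max} \cdots f_{i_{2}}^{\max}\overline{P}$, it identifies the right-hand side of \eqref{eq:key} with $z^{-1} \cdot \overline{\mu}_{z}$. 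So everything reduces to the single inequality $z^{-1} \cdot \widetilde{\mu}_{z} \ge z^{-1} \cdot \overline{\mu}_{z}$.

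For that inequality I would invoke Fact~\ref{fact:fjP}. Since $c_{1} \le \vp_{i_{1}}(P)$, the polytope $\overline{P}=f_{i_{1}}^{\vp_{i_{1}}(P)-c_{1}}\widetilde{P}$ is obtained from $\widetilde{P}$ by finitely many applications of $f_{i_{1}}$, each of them nonzero (by definition of $\vp_{i_{1}}(P)$) and hence polytope-enlarging by Fact~\ref{fact:fjP}; thus $\widetilde{P} \subseteq \overline{P}$. The $z$-vertex $\widetilde{\mu}_{z}$ of $\widetilde{P}$ then lies in $\widetilde{P} \subseteq \overline{P}$, so the half-space description \eqref{eq:poly} of $\overline{P}=P(\overline{\mu}_{\bullet})$ forces $z^{-1} \cdot \widetilde{\mu}_{z}-z^{-1} \cdot \overline{\mu}_{z} \in \sum_{j \in I}\BR_{\ge 0}h_{j}$. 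Finally, since $\widetilde{P},\overline{P} \in \mv(\lambda)$ share the highest vertex $\widetilde{\mu}_{w_{0}}=\overline{\mu}_{w_{0}}=\lambda$, Remark~\ref{rem:GGMS-Q} places both $z^{-1} \cdot \widetilde{\mu}_{z}$ and $z^{-1} \cdot \overline{\mu}_{z}$ in $\lambda-Q^{\vee}$, so their difference lies in $Q^{\vee} \cap \sum_{j \in I}\BR_{\ge 0}h_{j}=Q^{\vee}_{+}$. Chaining the three steps gives \eqref{eq:key}.

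The step I expect to be the real crux is the decision to peel off $f_{i_{1}}$ and to recast the comparison as the polytope containment $\widetilde{P} \subseteq \overline{P}$: stripping the rightmost operator instead would force one to compare $f_{i_{k-1}}^{c_{k-1}} \cdots f_{i_{1}}^{c_{1}}P$ with $f_{i_{k-1}}^{\max} \cdots f_{i_{1}}^{\max}P$, which are not visibly nested, and a direct termwise control of the exponents $c_{l}$ against the ``maximal'' ones is not readily available. A smaller technical point worth flagging is that the difference of two coweights lying in $\sum_{j \in I}\BR_{\ge 0}h_{j}$ need not a priori lie in $Q^{\vee}_{+}$; this is exactly why the observation that $\widetilde{P}$ and $\overline{P}$ have the same highest vertex (so that Remark~\ref{rem:GGMS-Q} applies to both) is needed to finish.
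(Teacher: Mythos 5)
Your proof is correct, and it takes a route that is organizationally different from the paper's, though it rests on the same underlying ingredients. The paper's proof splits into two cases according to whether $c_{1}=0$ or $c_{1}>0$: in the first case it applies the induction hypothesis to $P$ itself along $s_{i_{2}}\cdots s_{i_{k}}$ and then invokes Proposition~\ref{prop:key1} (with $z=s_{i_{1}}x<x$) as a black box to compare $\wt(f_{i_{k}}^{\max}\cdots f_{i_{2}}^{\max}P)$ with $\wt(f_{i_{k}}^{\max}\cdots f_{i_{1}}^{\max}P)$; the case $c_{1}>0$ is then reduced to the first by replacing $P$ with $P'=f_{i_{1}}^{c_{1}}P$. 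You instead pass to $\widetilde{P}=f_{i_{1}}^{c_{1}}P$ and $\overline{P}=f_{i_{1}}^{\max}P$ from the start, handling both cases uniformly, and rather than citing Proposition~\ref{prop:key1} you re-derive the needed inequality $z^{-1}\cdot\widetilde{\mu}_{z}\ge z^{-1}\cdot\overline{\mu}_{z}$ directly from the containment $\widetilde{P}\subseteq\overline{P}$ (via Fact~\ref{fact:fjP}), the half-space description \eqref{eq:poly}, Proposition~\ref{prop:ehrig}, and the $Q^{\vee}$-integrality supplied by Remark~\ref{rem:GGMS-Q}. This is precisely the mechanism inside the paper's proof of Proposition~\ref{prop:key1} (Case~1 there), specialized to the pair $(\widetilde{P},\overline{P})$ rather than to $(P,\,f_{i_{1}}^{\max}P)$; so the underlying ideas coincide, but your version is more self-contained and avoids the case split. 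The trade-off is that the paper's organization isolates the Bruhat-monotonicity statement of Proposition~\ref{prop:key1} as a separate result, which has independent interest (it is the focus of the Appendix) and is also quoted elsewhere; inlining it, as you do, is cleaner for a stand-alone proof of Proposition~\ref{prop:key} but loses that modularity. Your closing remark that the $\BR_{\ge 0}$-cone containment must be upgraded to $Q^{\vee}_{+}$ via Remark~\ref{rem:GGMS-Q}, using that $\widetilde{P}$ and $\overline{P}$ share the highest vertex $\lambda$, is exactly the right point to flag and is handled correctly.
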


\begin{proof}
We show the assertion by induction on $\ell(x)$. 
If $\ell(x)=0$ or $1$, then the assertion is obvious. 
Hence we assume that $\ell(x) \ge 2$. 

\paragraph{Case 1.} 
Suppose that $c_{1}=0$; 
in this case, we have $f_{i_{k}}^{c_{\ell}} \cdots f_{i_{2}}^{c_{2}}P=
f_{i_{k}}^{c_{\ell}} \cdots f_{i_{2}}^{c_{2}}f_{i_{1}}^{c_{1}}P \ne \bzero$. 
By the induction hypothesis, we have 
%
%%%%%%%%%%%%%%%%%
%%% eq:key1-1 %%%
%%%%%%%%%%%%%%%%%
%
\begin{equation} \label{eq:key1-1}
\wt(f_{i_{k}}^{c_{\ell}} \cdots f_{i_{2}}^{c_{2}}P) \ge 
\wt(f_{i_{k}}^{\max} \cdots f_{i_{2}}^{\max}P).
\end{equation}
It follows from Proposition~\ref{prop:key1} 
(with $z=s_{i_{1}}x=s_{i_{2}} \cdots s_{i_{k}}$) that 
$(s_{i_{1}}x)^{-1} \cdot \mu_{s_{i_{1}}x} \ge x^{-1} \cdot \mu_{x}$.
Also, by Proposition~\ref{prop:ehrig},
\begin{equation*}
(s_{i_{1}}x)^{-1} \cdot \mu_{s_{i_{1}}x} = 
\wt(f_{i_{k}}^{\max} \cdots f_{i_{2}}^{\max}P), \qquad
x^{-1} \cdot \mu_{x}=
\wt(f_{i_{k}}^{\max} \cdots f_{i_{2}}^{\max}f_{i_{1}}^{\max}P).
\end{equation*}
Therefore, we obtain
%
%%%%%%%%%%%%%%%%%
%%% eq:key1-2 %%%
%%%%%%%%%%%%%%%%%
%
\begin{equation} \label{eq:key1-2}
\wt(f_{i_{k}}^{\max} \cdots f_{i_{2}}^{\max}P) \ge 
\wt(f_{i_{k}}^{\max} \cdots f_{i_{2}}^{\max}f_{i_{1}}^{\max}P).
\end{equation}
Combining \eqref{eq:key1-1} and \eqref{eq:key1-2}, 
we conclude that 
\begin{equation*}
\wt(f_{i_{k}}^{c_{k}} \cdots f_{i_{2}}^{c_{2}}f_{i_{1}}^{c_{1}}P) 
= 
\wt(f_{i_{k}}^{c_{k}} \cdots f_{i_{2}}^{c_{2}}P)
\ge \wt(f_{i_{k}}^{\max} \cdots f_{i_{2}}^{\max}f_{i_{1}}^{\max}P),
\end{equation*}
as desired. 

\paragraph{Case 2.} 
Suppose that $c_{1} \ne 0$. We set $P':=f_{i_{1}}^{c_{1}}P$; 
note that $f_{i_{1}}^{\max}P'=f_{i_{1}}^{\max}P$ by the definition of 
$f_{i_{1}}^{\max}$, and that 
$f_{i_{k}}^{c_{k}} \cdots f_{i_{2}}^{c_{2}}P'=
f_{i_{k}}^{c_{k}} \cdots f_{i_{2}}^{c_{2}}f_{i_{1}}^{c_{1}}P \ne \bzero$ 
by our assumption. Then we obtain 
\begin{align*}
\wt(f_{i_{k}}^{c_{k}} \cdots f_{i_{2}}^{c_{2}}f_{i_{1}}^{c_{1}}P)
& = \wt (f_{i_{k}}^{c_{k}} \cdots f_{i_{2}}^{c_{2}}P') \\
& \ge \wt(f_{i_{k}}^{\max} \cdots f_{i_{2}}^{\max}f_{i_{1}}^{\max}P') 
  \quad \text{by the result in Case~1} \\
& = \wt(f_{i_{k}}^{\max} \cdots f_{i_{2}}^{\max}f_{i_{1}}^{\max}P),
\end{align*}
as desired. This proves the proposition. 
\end{proof}

%==============================%
%     START SUBSECTION 0303    %
%==============================%
%
\subsection{Proof of the main result.}
\label{subsec:prf-main}

We are now in a position to give a proof of our main result. 

\begin{proof}[Proof of Theorem~\ref{thm:main}]
We write the GGMS data of $P_{1} \in \mv(\lambda_{1})$, 
$P_{2} \in \mv(\lambda_{2})$, and $P \in \mv(\lambda)$, 
respectively, as: 
\begin{equation*}
\mu^{(1)}_{\bullet}=(\mu^{(2)}_{w})_{w \in W}, \quad
\mu^{(2)}_{\bullet}=(\mu^{(2)}_{w})_{w \in W}, \quad \text{and} \quad
\mu_{\bullet}=(\mu_{w})_{w \in W}. 
\end{equation*}
We know from Lemma~\ref{lem:Minkowski} that 
the Minkowski sum $P_{1}+P_{2}$ is the pseudo-Weyl polytope 
$P(\mu_{\bullet}^{(1)}+\mu_{\bullet}^{(2)})$ 
with GGMS datum $\mu_{\bullet}^{(1)}+\mu_{\bullet}^{(2)}=
 (\mu_{w}^{(1)}+\mu_{w}^{(2)})_{w \in W}$. 
Therefore, by Lemma~\ref{lem:bz}, we have
\begin{align*}
& P \subset P_{1}+P_{2} \quad \Leftrightarrow \\
& 
(M_{w \cdot \Lambda_{j}}=) \ 
 \pair{\mu_{w}}{w \cdot \Lambda_{j}} \ge 
 \pair{\mu_{w}^{(1)}+\mu_{w}^{(2)}}{w \cdot \Lambda_{j}} \  (=M_{w \cdot \Lambda_{j}}')
\quad \text{for all $w \in W$ and $j \in I$}, 
\end{align*}
where $M_{\bullet}=(M_{\gamma})_{\gamma \in \Gamma}$ and 
$M_{\bullet}'=(M_{\gamma}')_{\gamma \in \Gamma}$ are the BZ data of 
$P=P(\mu_{\bullet})$ and $P_{1}+P_{2}=
P(\mu_{\bullet}^{(1)}+\mu_{\bullet}^{(2)})$, respectively. 
Hence, in order to prove the inclusion $P \subset P_{1}+P_{2}$, 
it suffices to show that 
%
%%%%%%%%%%%%%%%
%%% eq:main %%%
%%%%%%%%%%%%%%%
%
\begin{equation} \label{eq:main}
w^{-1} \cdot \mu_{w} - w^{-1} \cdot (\mu_{w}^{(1)}+\mu_{w}^{(2)}) \in
\sum_{j \in I}\BR_{\ge 0} h_{j}
\quad \text{for all $w \in W$}.
\end{equation}

Take (and fix) $w \in W$ arbitrarily, 
and let $w=s_{i_{1}}s_{i_{2}} \cdots s_{i_{k}}$ 
be a reduced expression of $w$. 
Then we see from Proposition~\ref{prop:ehrig} that 
$w^{-1} \cdot \mu_{w}=
 \wt (f_{i_{k}}^{\max} \cdots f_{i_{2}}^{\max}f_{i_{1}}^{\max}P)$.
Because $\iota_{\lambda}(P)=P_{1} \otimes P_{2}$ by our assumption, 
we infer that
\begin{equation*}
\wt (f_{i_{k}}^{\max} \cdots f_{i_{2}}^{\max}f_{i_{1}}^{\max}P)=
\wt (f_{i_{k}}^{\max} \cdots f_{i_{2}}^{\max}f_{i_{1}}^{\max}(P_{1} \otimes P_{2})).
\end{equation*}
Here, by repeated application of the tensor product rule for 
the action of lowering Kashiwara operators $f_{j}$ for $j \in I$ 
(see \cite[Chapitre~9, Exercice~9.1]{Kas}), we deduce that
\begin{equation*}
f_{i_{k}}^{\max} \cdots f_{i_{2}}^{\max}f_{i_{1}}^{\max}(P_{1} \otimes P_{2})
= 
(f_{i_{k}}^{c_{k}} \cdots f_{i_{2}}^{c_{2}}f_{i_{1}}^{c_{1}}P_{1})
\otimes 
(f_{i_{k}}^{\max} \cdots f_{i_{2}}^{\max}f_{i_{1}}^{\max}P_{2})
\end{equation*}
for some $c_{1},\,c_{2},\,\dots,\,c_{k} \in \BZ_{\ge 0}$; 
note that $f_{i_{k}}^{c_{k}} \cdots f_{i_{2}}^{c_{2}}f_{i_{1}}^{c_{1}}P_{1} \ne \bzero$ 
since the left-hand side of the equation above is not equal to $\bzero$ by 
the definition of $f_{j}^{\max}$, $j \in I$. Hence we have 
\begin{align*}
w^{-1} \cdot \mu_{w} 
 & = \wt(f_{i_{k}}^{\max} \cdots f_{i_{2}}^{\max}f_{i_{1}}^{\max}(P_{1} \otimes P_{2})) \\
 & = \wt (f_{i_{k}}^{c_{k}} \cdots f_{i_{2}}^{c_{2}}f_{i_{1}}^{c_{1}}P_{1})+
     \wt (f_{i_{k}}^{\max} \cdots f_{i_{2}}^{\max}f_{i_{1}}^{\max}P_{2}) \\
 & = \wt (f_{i_{k}}^{c_{k}} \cdots f_{i_{2}}^{c_{2}}f_{i_{1}}^{c_{1}}P_{1})+
     w^{-1} \cdot \mu^{(2)}_{w} \quad \text{by Proposition~\ref{prop:ehrig}}.
\end{align*}
Also, we see again by Proposition~\ref{prop:ehrig} that 
\begin{equation*}
w^{-1} \cdot (\mu^{(1)}_{w}+\mu^{(2)}_{w}) =
w^{-1} \cdot \mu^{(1)}_{w}+ w^{-1} \cdot \mu^{(2)}_{w} =
  \wt (f_{i_{k}}^{\max} \cdots f_{i_{2}}^{\max}f_{i_{1}}^{\max}P_{1}) + 
  w^{-1} \cdot \mu^{(2)}_{w}. 
\end{equation*}
From the above, we conclude that 
\begin{equation*}
w^{-1} \cdot \mu_{w} - w^{-1} \cdot (\mu_{w}^{(1)}+\mu_{w}^{(2)}) 
= \wt (f_{i_{k}}^{c_{k}} \cdots f_{i_{2}}^{c_{2}}f_{i_{1}}^{c_{1}}P_{1})-
  \wt (f_{i_{k}}^{\max} \cdots f_{i_{2}}^{\max}f_{i_{1}}^{\max}P_{1});
\end{equation*}
the latter element is contained in 
$Q^{\vee}_{+}=\sum_{j \in I}\BZ_{\ge 0} h_{j}$ 
by Proposition~\ref{prop:key} (with $\lambda=\lambda_{1}$ and $P=P_{1}$), 
and hence in $\sum_{j \in I}\BR_{\ge 0}h_{j}$. Thus we have proved 
\eqref{eq:main}, thereby completing the proof of the theorem. 
\end{proof}

\begin{rem}
In the course of the proof above, we showed that for each $w \in W$, 
$w^{-1} \cdot \mu_{w}-w^{-1} \cdot \mu^{(2)}_{w}$ is the weight of 
the element $f_{i_{k}}^{c_{k}} \cdots f_{i_{2}}^{c_{2}}f_{i_{1}}^{c_{1}}P_{1} 
\in \mv(\lambda_{1})$. Because the Weyl group $W$ acts on the crystal 
$\mv(\lambda_{1}) \cong \CB(\lambda_{1})$ in a canonical way, we see that 
$\mu_{w}-\mu^{(2)}_{w}$ is also the weight of some element in $\mv(\lambda_{1})$. 
Therefore, in the notation of \S\ref{subsec:geom}, we obain 
$\mu_{w}-\mu^{(2)}_{w} \in \Omega(\lambda_{1})$; 
here, we recall the (well-known) fact that 
$\Omega(\lambda_{1})=\Conv(W \cdot \lambda_{1}) \cap (\lambda_{1}+Q^{\vee})$.
\end{rem}

%========================%
%     START SECTION A    %
%========================%
%
\appendix
\section{Appendix: A geometric proof of the inequality \eqref{eq:key1}.}
\label{sec:key1}

\setcounter{subsection}{1}
\setcounter{equation}{0}

The aim of the appendix is to give a purely geometric proof of 
the inequality \eqref{eq:key1} in Proposition~\ref{prop:key1}; 
in fact, we prove a (slightly) strengthened form of this inequality, 
which can be regarded as a generalization of 
\cite[Proposition~2.7]{Kam0} in the case of type $A$ 
to an arbitrary semisimple Lie algebra (see Remark~\ref{rem:key1a}).
Below we use the 
setting of \S\ref{subsec:geom}. 
Let $\lambda \in X_{\ast}(T)_{+} \subset \Fh_{\BR}$ 
be a dominant coweight, and $P=P(\mu_{\bullet}) \in \mv(\lambda)$ 
an (arbitrary) MV polytope with GGMS datum 
$\mu_{\bullet}=(\mu_{w})_{w \in W}$. 
%
%%%%%%%%%%%%%%%%%
%%% prop:apdx %%%
%%%%%%%%%%%%%%%%%
%
\begin{prop}[cf. Proposition~\ref{prop:key1}] \label{prop:apdx}
Fix $j \in I$ arbitrarily. Let $x,\,z \in W$ be such that 
$zW_{\Lambda_{j}} \le xW_{\Lambda_{j}}$ in the Bruhat order on 
the cosets $W/W_{\Lambda_{j}}$ modulo the stabilizer $W_{\Lambda_{j}}$ 
of $\Lambda_{j}$ in $W$. Then, we have 
$\pair{\mu_{z}}{z \cdot \Lambda_{j}} \ge \pair{\mu_{x}}{x \cdot \Lambda_{j}}$, 
that is, $M_{z \cdot \Lambda_{j}} \ge M_{x \cdot \Lambda_{j}}$. 
\end{prop}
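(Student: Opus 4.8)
The plan is to pass from the polytope $P$ to its MV cycle and argue geometrically inside $\ol{\Gr^{\lambda}}$, using Anderson's description of MV polytopes (\S\ref{subsec:geom}). Let $\bb = \bb(\mu_{\bullet}) \subset \ol{\Gr^{\lambda}}$ be the MV cycle attached to $P = P(\mu_{\bullet})$ by Theorem~\ref{thm:Kam1}. Each orbit $S^{w}_{\mu_{w}}$ is $T$-stable, hence so is $\bb$, and its moment map image equals $P(\mu_{\bullet})$; combining this with \eqref{eq:polybz} and \eqref{eq:conv} gives, for all $w \in W$ and $j \in I$,
\[
M_{w \cdot \Lambda_{j}}
= \pair{\mu_{w}}{w \cdot \Lambda_{j}}
= \min\bigl\{\pair{\gamma}{w \cdot \Lambda_{j}} \mid \gamma \in X_{\ast}(T),\ [t^{\gamma}] \in \bb\bigr\}.
\]
Since moreover $M_{w \cdot \Lambda_{j}}$ depends only on the coset $w W_{\Lambda_{j}}$, the proposition is equivalent to the statement that every $T$-fixed point $[t^{\gamma}] \in \bb$ satisfies $\pair{\gamma}{z \cdot \Lambda_{j}} \ge M_{x \cdot \Lambda_{j}}$.

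The first reduction is to a single covering relation. The Bruhat order on $W/W_{\Lambda_{j}}$ is graded by the length of the minimal coset representative, so by the chain property it suffices to treat the case in which $z$ and $x$ are the minimal representatives of two cosets with $z W_{\Lambda_{j}} \lessdot x W_{\Lambda_{j}}$; then $\ell(x) = \ell(z)+1$, and by the structure of the Bruhat order on quotients we may further assume $x = s_{i} z$ for a single simple reflection $s_{i}$ with $z W_{\Lambda_{j}} \ne x W_{\Lambda_{j}}$. Now observe that ${}^{z}U(\CK)$ and ${}^{x}U(\CK)$ coincide outside the rank-one subgroup of $G$ generated by the $\pm\alpha_{i}$-root subgroups, the two groups differing exactly by exchanging the $\alpha_{i}$- and $(-\alpha_{i})$-parts; hence the pair of orbits $S^{z}_{\nu}$, $S^{x}_{\nu}$ and their closures are controlled by this rank-one group. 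This lets me restrict attention to an $SL_{2}$- (or $PGL_{2}$-)slice of $\ol{\Gr^{\lambda}}$ along the chain of $T$-fixed points linking $[t^{\mu_{x}}]$ to the $T$-fixed points of $\bb$.

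The geometric core, and the step I expect to be hardest, is the resulting rank-one estimate: given $[t^{\gamma}] \in \bb \subset \ol{\Gr^{\lambda}} \cap \ol{S^{x}_{\mu_{x}}}$, the closure relations \eqref{eq:Grlam} and \eqref{eq:Snuw}, read inside the $\alpha_{i}$-slice, force $\pair{\gamma}{z \cdot \Lambda_{j}} \ge \pair{\mu_{x}}{x \cdot \Lambda_{j}} = M_{x \cdot \Lambda_{j}}$. The mechanism is that containment in $\ol{S^{x}_{\mu_{x}}}$ pins down the $x$-direction, while containment in $\ol{\Gr^{\lambda}}$ supplies the matching bound in the $z$-direction (a local substitute for the fact that $\mu_{w_{0}} = \lambda$), and the hypothesis that $s_{i}$ moves the $\Lambda_{j}$-coset is precisely what couples the $z$- and $x$-pairings with $\Lambda_{j}$. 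In type $A$ this slice computation is a reformulation of \cite[Proposition~2.7]{Kam0} --- a comparison of the dimensions of the intersections of a fixed subspace with the two flags indexed by $z W_{\Lambda_{j}}$ and $x W_{\Lambda_{j}}$ --- and that is the model I would follow, now read off from the geometry of $\ol{\Gr^{\lambda}}$ for a general semisimple $G$ via \cite{A}.

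Finally, minimizing over $[t^{\gamma}] \in \bb$ yields $M_{z \cdot \Lambda_{j}} \ge M_{x \cdot \Lambda_{j}}$ for the covering step, and concatenating along a saturated chain in $W/W_{\Lambda_{j}}$ proves Proposition~\ref{prop:apdx}. Applying it for every $j \in I$ to a pair $z \le x$ in $W$ (so that $z W_{\Lambda_{j}} \le x W_{\Lambda_{j}}$ for all $j$) then recovers Proposition~\ref{prop:key1}, via the equivalences in Remark~\ref{rem:key1}, without recourse to the crystal-theoretic input of \S\ref{subsec:keys}; this is the precise sense in which Proposition~\ref{prop:apdx} strengthens it (cf.\ Remark~\ref{rem:key1a}).
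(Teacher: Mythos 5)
Your high-level strategy — pass to the MV cycle $\bb = \bb(\mu_\bullet)$, use that $M_{w\cdot\Lambda_j} = \min\{\pair{\gamma}{w\cdot\Lambda_j} : [t^\gamma]\in\bb\}$, reduce along a chain in $W/W_{\Lambda_j}$, and do a rank-one estimate — is exactly the paper's. But there are two genuine gaps. First, the reduction to a \emph{simple} reflection is false. The Bruhat covering relations on $W/W_{\Lambda_j}$ are of the form $zW_{\Lambda_j}\lessdot (s_\alpha z)W_{\Lambda_j}$ for a general positive root $\alpha$ with $\pair{\alpha^\vee}{z\cdot\Lambda_j}\geq 0$; one cannot in general choose $\alpha$ simple, nor can one interpolate a cover by a chain of left-multiplications by simple reflections (already in rank two, covers in the Bruhat order require non-simple reflections). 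The paper therefore works with the rank-one subgroup $L=L_\alpha'$ attached to an arbitrary positive root $\alpha$; your assertion that ${}^z U(\CK)$ and ${}^x U(\CK)$ ``differ exactly by exchanging the $\alpha_i$- and $(-\alpha_i)$-parts'' is specific to the simple case and does not carry over.

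Second, even with the correct $\alpha$, the rank-one estimate that you flag as the hardest step is left as a plan, and it does not go through uniformly: the argument that $[t^{s_\alpha\cdot\mu_z}]\in\bb$ (which is what lets you pin things down via $\bb\subset\ol{S^{s_\alpha z}_{\mu_{s_\alpha z}}}$ and \eqref{eq:Snuw}) only works when $\pair{\mu_z}{\alpha}<0$; this is precisely what makes the $(L\cap U)$-orbit of $[t^{\mu_z}]$ dense in $L[t^{\mu_z}]$. When $\pair{\mu_z}{\alpha}\geq 0$ the paper does \emph{not} use the geometry of $\bb$ at all, but instead closes the case by a direct computation with the GGMS inequality \eqref{eq:GGMS}: from $(s_\alpha z)^{-1}\cdot\mu_z\geq (s_\alpha z)^{-1}\cdot\mu_{s_\alpha z}$ one extracts $M_{x\cdot\Lambda_j}\leq M_{z\cdot\Lambda_j}-\pair{\mu_z}{\alpha}\pair{\alpha^\vee}{z\cdot\Lambda_j}\leq M_{z\cdot\Lambda_j}$. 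Your proposal has no analogue of this sign-dichotomy, so as written the geometric core would fail in the $\pair{\mu_z}{\alpha}\geq 0$ regime. Fixing these two points — general $\alpha$, and the split into the two cases with a non-geometric Case 2 — would bring your plan in line with the paper's proof.
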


\begin{proof}
Note that by the definition of the Bruhat order on $W/W_{\Lambda_{j}}$, 
we may assume that $x=s_{\alpha}z$ for a positive root $\alpha \in \Delta_{+}$
such that $\pair{\alpha^{\vee}}{z \cdot \Lambda_{j}} \ge 0$, where $\alpha^{\vee} \in \Fh$ 
denotes the coroot corresponding to $\alpha$. We divide the proof into two parts
according as $\pair{\mu_{z}}{\alpha} < 0$ or $\pair{\mu_{z}}{\alpha} \ge 0$. 

\paragraph{Case 1.}
First we assume that $\pair{\mu_{z}}{\alpha} < 0$. Let $L_{\alpha}$ be the 
connected subgroup of $G$ generated by $T$ and the one-parameter unipotent
subgroups corresponding to $\pm\alpha$, and $L$ the derived (connected) 
subgroup of $L_{\alpha}$, which is isomorphic to either $SL_{2}(\BC)$ or 
$PGL_{2}(\BC)$; note that $L[t^{s_{\alpha} \cdot \mu_{z}}]=
L \dot{s}_{\alpha}[t^{\mu_{z}}]=L[t^{\mu_{z}}]$ since $\dot{s}_{\alpha} \in L$. 
Denote by $\bb=\bb(\mu_{\bullet}) \in \CZ(\lambda)$ the MV cycle corresponding 
to the MV polytope $P=P(\mu_{\bullet}) \in \mv(\lambda)$ under the bijection 
$\Phi_{\lambda}:\mv(\lambda) \rightarrow \CZ(\lambda)$ in Theorem~\ref{thm:Kam1}.
Then it follows that $[t^{\mu_{z}}] \in \bb$. Since $\bb \in \CZ(\lambda)$ is 
an irreducible component of 
$\ol{ G(\CO)[t^{\lambda}] \cap U(\CK)[t^{\mu_{e}}] } \subset \Gr$, 
it is stable under the action of $U(\CO)=G(\CO) \cap U(\CK)$ and of 
$(L \cap U)(\CO) \subset U(\CO)$. In particular, we have 
\begin{equation*}
(L \cap U)(\CO)[t^{\mu_{z}}] \subset U(\CO)[t^{\mu_{z}}] \subset \bb,
\end{equation*}
and hence
\begin{equation*}
\ol{ (L \cap U)(\CO)[t^{\mu_{z}}] } \subset 
\ol{ U(\CO)[t^{\mu_{z}}] } \subset \ol{\bb}=\bb.
\end{equation*}

Now we observe that the stabilizer 
$\Stab_{L}[t^{\mu_{z}}]$ of $[t^{\mu_{z}}] \in \Gr$ in $L$ 
is equal to the intersection 
$L \cap \Ad(t^{\mu_{z}})G(\CO)=
 L \cap t^{\mu_{z}}G(\CO)(t^{\mu_{z}})^{-1}$
in $G(\CK)$. Here, from the assumption that 
$\pair{\mu_{z}}{\alpha} < 0$, it is easily shown 
(by using the connectedness of the Borel subgroup 
$L \cap {}^{s_{\alpha}}\!B$ of $L$) that 
$L \cap \Ad(t^{\mu_{z}})G(\CO) = L \cap {}^{s_{\alpha}}\!B \subset L$. 
Moreover, thanks to the Bruhat decomposition:
\begin{equation*}
L=\bigl((L \cap U) \dot{s}_{\alpha} (L \cap B)\bigr) \sqcup (L \cap B), 
\end{equation*}
we deduce that 
\begin{equation*}
(L \cap U)(L \cap {}^{s_{\alpha}}\!B)=
(L \cap U) \dot{s}_{\alpha} (L \cap B) \dot{s}_{\alpha} \subset 
L \dot{s}_{\alpha}=L
\end{equation*}
is an open dense subset, and hence 
\begin{equation*}
(L \cap U)[t^{\mu_{z}}]=
(L \cap U)(L \cap {}^{s_{\alpha}}\!B)[t^{\mu_{z}}] \subset L[t^{\mu_{z}}]
\end{equation*}
is also an open dense subset (recall that 
$L \cap {}^{s_{\alpha}}\!B=\Stab_{L}[t^{\mu_{z}}]$). 
Therefore, we see that
\begin{align*}
[t^{s_{\alpha} \cdot \mu_{z}}] \in 
 L[t^{s_{\alpha} \cdot \mu_{z}}] = L[t^{\mu_{z}}] 
 = \ol{ (L \cap U)[t^{\mu_{z}}] }
 \subset \ol{ (L \cap U)(\CO)[t^{\mu_{z}}] } 
 \subset \ol{ U(\CO)[t^{\mu_{z}}] } \subset \bb.
\end{align*}
Also, by part (1) of Theorem~\ref{thm:Kam1}, we infer that 
$\bb \subset \ol{ S_{\mu_{s_{\alpha}z}}^{s_{\alpha}z} }=
\ol{ {}^{s_{\alpha}z} U(\CK)[t^{\mu_{s_{\alpha} z}}] }$. 
Consequently, we obtain 
$[t^{s_{\alpha} \cdot \mu_{z}}] \in 
 \ol{ S_{\mu_{s_{\alpha}z}}^{s_{\alpha}z} }$. 
Hence it follows from \eqref{eq:Snuw} that 
\begin{equation*}
(s_{\alpha}z)^{-1} \cdot (s_{\alpha} \cdot \mu_{z}) \ge 
(s_{\alpha}z)^{-1} \cdot \mu_{s_{\alpha}z}, 
\quad \text{i.e.,} \quad
z^{-1} \cdot \mu_{z} \ge (s_{\alpha}z)^{-1} \cdot \mu_{s_{\alpha}z}=
x^{-1} \cdot \mu_{x}.
\end{equation*}
Namely, we have shown that 
$z^{-1} \cdot \mu_{z}-x^{-1} \cdot \mu_{x} \in Q^{\vee}_{+}$, 
which, in particular, implies that for the fixed $j \in I$, 
\begin{equation*}
M_{z \cdot \Lambda_{j}}-M_{x \cdot \Lambda_{j}}=
\pair{\mu_{z}}{z \cdot \Lambda_{j}}-
\pair{\mu_{x}}{x \cdot \Lambda_{j}}=
\pair{z^{-1} \cdot \mu_{z}-x^{-1} \cdot \mu_{x}}{\Lambda_{j}} \ge 0,
\end{equation*}
as desired.

\paragraph{Case 2.} 
Next we assume that $\pair{\mu_{z}}{\alpha} \ge 0$. 
Recall that we have $(s_{\alpha}z)^{-1} \cdot \mu_{z} \ge 
(s_{\alpha}z)^{-1} \cdot \mu_{s_{\alpha}z}$ by the definition of 
GGMS data (see \eqref{eq:GGMS}). Also, we have
$s_{\alpha} \cdot \mu_{z}=\mu_{z}-\pair{\mu_{z}}{\alpha}\alpha^{\vee}$, 
where $\pair{\mu_{z}}{\alpha} \ge 0$ by our assumption.
Therefore, for the fixed $j \in I$ such that 
$\pair{\alpha^{\vee}}{z \cdot \Lambda_{j}} \ge 0$, we compute:
\begin{align*}
0 & \le 
 \Bpair{(s_{\alpha}z)^{-1} \cdot \mu_{z}-
   (s_{\alpha}z)^{-1} \cdot \mu_{s_{\alpha}z}}{\Lambda_{j}}=
 \pair{s_{\alpha} \cdot \mu_{z}}{z \cdot \Lambda_{j}}-
 \pair{\mu_{s_{\alpha}z}}{(s_{\alpha}z) \cdot \Lambda_{j}}\\
& =
 \Bpair{\mu_{z}-\pair{\mu_{z}}{\alpha}\alpha^{\vee}}{z \cdot \Lambda_{j}}-
 M_{(s_{\alpha}z) \cdot \Lambda_{j}}
 = M_{z \cdot \Lambda_{j}}-\pair{\mu_{z}}{\alpha}\pair{\alpha^{\vee}}{z \cdot \Lambda_{j}}-
   M_{(s_{\alpha}z) \cdot \Lambda_{j}},
\end{align*}
and hence obtain
\begin{equation*}
M_{x \cdot \Lambda_{j}}=
M_{(s_{\alpha}z) \cdot \Lambda_{j}} \le 
M_{z \cdot \Lambda_{j}}-
 \underbrace{\pair{\mu_{z}}{\alpha}}_{\ge 0}
 \underbrace{\pair{\alpha^{\vee}}{z \cdot \Lambda_{j}}}_{\ge 0} 
\le M_{z \cdot \Lambda_{j}},
\end{equation*}
as desired. This proves the proposition. 
\end{proof}

%======================%
%     BIBLIOGRAPHY     %
%======================%

{\small
\setlength{\baselineskip}{13pt}
\renewcommand{\refname}{References}

}

\end{document}